\documentclass{amsart}

\usepackage[utf8x]{inputenc}
\usepackage{amsmath}
\usepackage{amssymb}
\usepackage{amsthm}
\usepackage{graphicx}
\usepackage{overpic}
\usepackage{subcaption}
\usepackage[square,numbers,sort]{natbib}
\usepackage{colonequals}
\usepackage{todonotes}
\usepackage{tikz}
\usepackage{hyperref}

\bibliographystyle{abbrvnat}

\newcommand{\dist}{\operatorname{dist}}
\newcommand{\R}{\mathbb{R}}

\providecommand{\norm}[1]{\lVert#1\rVert}
\newcommand{\parder}[2]{\frac{\partial #1}{\partial #2}}

\DeclareMathOperator*{\argmin}{arg\,min}

\newtheorem{definition}{Definition}[section]
\newtheorem{theorem}{Theorem}[section]
\newtheorem{lemma}{Lemma}[section]

\graphicspath{{gfx/}}

\title{Test function spaces for geometric finite elements}
\author{Oliver Sander}

\begin{document}

\begin{abstract}
We construct test function spaces for geometric finite elements.  Geometric finite elements (GFE) are generalizations
of Lagrangian finite elements to situations where the unknown function maps into a nonlinear space.  Test functions
for such spaces arise as variations of
GFE functions wherever the GFE function space has a local manifold structure.
For any given GFE function $u_h$, the test functions form a linear space that depends on $u_h$.
They generalize Jacobi fields in the same way that the GFE interpolation functions generalize
geodesic curves.  Having test function spaces allows to extend the
GFE method to boundary value problems that do not have a minimization formulation.

\smallskip
\noindent \textbf{Keywords:} geometric finite elements, geodesic interpolation, projection-based interpolation, test functions, generalized Jacobi fields
\end{abstract}

\maketitle

\section{Introduction}

Geometric finite elements (GFE) are generalizations of Lagrangian finite elements that discretize spaces of maps
into a nonlinear Riemannian manifold $M$.  The classical Lagrangian finite element functions are recovered when $M = \R$.
The generalizations can be achieved in a number of ways.  Originally,
{\em geodesic} finite elements were introduced for one-dimensional, first-order approximations~\cite{sander:2010}, and subsequently
generalized to domains of arbitrary dimension~\cite{sander:2012}, and higher approximation orders~\cite{sander:2015}.
Optimal discretization error bounds were proved in~\cite{grohs_hardering_sander:2014, hardering:2015}, and
the discretization has been applied
successfully to problems in Cosserat mechanics~\cite{sander_neff_birsan:2016,sander:2010} and the computations
of harmonic maps~\cite{sander:2015}.
Later, {\em projection-based} finite elements were proposed and investigated in~\cite{grohs_sprecher:2013,sprecher:2016}.

In all these publications, geometric finite elements have only been applied to problems with a minimization formulation.  The fact that
GFE functions are $H^1$ maps allowed to reformulate the energy formulations straightforwardly as minimization problems
for algebraic functionals defined on a product manifold $M^n$, with $n$ the number of Lagrange nodes of
the grid.  Variations and optimality were only considered in this algebraic setting.

However, even though this does not directly follow from the original publications, GFE can also be used for problems
without a minimization structure.  The missing ingredient for this are spaces of suitable test functions.  In this
short note we construct such spaces.  Their definition follows directly from the local manifold structure of the discrete space,
and they generalize the Jacobi fields of classical differential geometry (e.g., \cite[Chap.\,5]{jost:2011}).
As such, they form linear spaces
of vector fields along GFE functions, and these spaces can be identified with their values at the Lagrange nodes.
Disregarding a few minor technical differences, the construction is the same both for geodesic finite elements
and for projection-based finite elements.

The construction of test functions as variations will appear trivial to people with experience
in geometric analysis.  On the other hand, for people with a numerical analysis background this construction
may not be quite as clear,
and the author has been prompted to write this article by repeated questions about the existence and
nature of GFE test functions.

Evaluating GFE test functions is easy and cheap.  For geodesic finite elements, provided the value of a GFE function
$u_h : \Omega \to M$ is given at a point $x$, then evaluating a test function for $u_h$ at the same point involves only
solving one linear system of equations in $\dim M$ variables.  For projection-based finite elements,
the evaluation procedure depends
on how the projection onto $M$ from an embedding space can be computed.  For the important case of $M$ being
a sphere, there is even a closed-form expression for the test functions.

Having test functions allows to state optimality conditions for minimization problems directly in
GFE spaces, in contrast to the approach of~\cite{sander:2012,sander:2015}, which formulated optimality conditions
only in the algebraic setting.  We work out both approaches, and show in Section~\ref{sec:minimization}
by trivial computations that both are equivalent.
This otherwise obvious result justifies our construction.

\section{Geometric finite elements}

We briefly review the two main constructions of geometric finite elements.  These differ only in the way
Lagrange interpolation on a single element is generalized to nonlinear spaces.  The first approach,
geodesic interpolation, is completely intrinsic.  Alternatively, projection-based interpolation needs
an embedding space of $M$, but leads to more efficient algorithms for certain choices of $M$.

\subsection{Sobolev spaces of manifold-valued functions}

Let $\Omega$ be an open and connected subset of $\R^d$ with a Lipschitz boundary,
and let $M$ be a smooth, connected manifold.
The following definition of a Sobolev space for functions with values in $M$ is standard
(see, e.g., \cite{schoen_uhlenbeck:1982,hardering:2015}).
\begin{definition}
\label{def:sobolev_space}
 Let $\imath:M\to \mathbb{R}^N$ be an isometric embedding (which always exists by \cite{nash:1956}).
 For $k \in \mathbb{N}_0$ and $p \in \mathbb{N} \cup \{\infty\}$ define
 \begin{equation*}
	W^{k,p}(\Omega,M)\colonequals
	\left\{v\in W^{k,p}(\Omega,\mathbb{R}^N):\, v(x)\in \imath(M),\ a.e.\right\}.
 \end{equation*}
\end{definition}

For nonlinear $M$ these spaces obviously do not form vector spaces.  However, under certain
smoothness conditions the manifold structure of $M$ is inherited.  The following result
is proved in~\cite{palais:1968}.

\begin{lemma}
If $k>d/p$, the spaces $W^{k,p}(\Omega,M)$ are Banach manifolds.
\end{lemma}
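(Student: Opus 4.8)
The plan is to build a $C^\infty$ Banach atlas on $W^{k,p}(\Omega,M)$ modeled on Sobolev sections of pullback bundles, in the style of Palais. Beyond the hypotheses as stated we use that $\Omega$ is bounded, so that $\overline\Omega$ is compact; this is automatic in the finite element setting. The decisive consequence of $k>d/p$ is the Sobolev embedding $W^{k,p}(\Omega,\R^N)\hookrightarrow C^0(\overline\Omega,\R^N)$: it shows, first, that every $v\in W^{k,p}(\Omega,M)$ has a continuous representative with $v(\overline\Omega)$ a compact subset of $\imath(M)$, and, second, that $W^{k,p}(\Omega)$ is a Banach algebra under pointwise multiplication. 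These two facts make the constructions below well defined and, ultimately, smooth.

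First I would fix $u\in W^{k,p}(\Omega,M)$ and form the pullback bundle $E_u\colonequals u^*TM$, a rank-$(\dim M)$ vector bundle over the compact set $\overline\Omega$ whose cocycle of transition functions is $W^{k,p}$, hence continuous. Because $k>d/p$ and $W^{k,p}(\Omega)$ is a Banach algebra, the space $\Gamma^{k,p}(E_u)$ of $W^{k,p}$ sections of $E_u$ is a well-defined Banach space, independent up to equivalent norms of the chosen finite trivializing atlas. By compactness of $u(\overline\Omega)$ there is an $\varepsilon>0$ such that, for every $x$, $\exp_{u(x)}$ restricted to the $\varepsilon$-ball in $T_{u(x)}M$ is a diffeomorphism onto its image. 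Setting
\[
\mathcal U_u\colonequals\bigl\{\sigma\in\Gamma^{k,p}(E_u):\ \norm{\sigma}_{C^0(\overline\Omega)}<\varepsilon\bigr\},\qquad
\Phi_u(\sigma)(x)\colonequals\exp_{u(x)}\sigma(x),
\]
one gets a map $\Phi_u\colon\mathcal U_u\to W^{k,p}(\Omega,M)$; the set $\mathcal U_u$ is open in $\Gamma^{k,p}(E_u)$ by $\Gamma^{k,p}\hookrightarrow C^0$, and $\Phi_u$ is injective by fibrewise injectivity of $\exp$. The pair $(\Phi_u(\mathcal U_u),\Phi_u^{-1})$ is the candidate chart at $u$.

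Next I would verify compatibility. For $u_1,u_2$ with overlapping chart images, the transition map $\Phi_{u_2}^{-1}\circ\Phi_{u_1}$ is, read in local trivializations, the superposition operator induced by the smooth fibre-preserving map $(x,v)\mapsto\bigl(\exp_{u_2(x)}\bigr)^{-1}\exp_{u_1(x)}v$ between open subsets of the model bundles. Smoothness of such superposition operators on $W^{k,p}$ — the "$\Omega$-lemma" — is the technical heart and the main obstacle: one proves by induction on the number of derivatives, combining the chain and Leibniz rules with the Banach algebra property of $W^{k,p}$ and the embedding into $C^0$, that $\sigma\mapsto F(\cdot,\sigma(\cdot))$ is $C^\infty$ with derivatives the expected fibrewise differentials. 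This is exactly where $k>d/p$ is indispensable; I would either cite Palais or reproduce the inductive estimate. Granting it, all transition maps are $C^\infty$ diffeomorphisms of open subsets of Banach spaces.

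Finally I would assemble the atlas. The chart domains cover $W^{k,p}(\Omega,M)$ since $u\in\Phi_u(\mathcal U_u)$; each $\Phi_u$ is a homeomorphism onto a relatively open subset of $W^{k,p}(\Omega,M)$ (again by $\Gamma^{k,p}\hookrightarrow C^0$), so the atlas topology agrees with the subspace topology inherited from $W^{k,p}(\Omega,\R^N)$, which is Hausdorff; and the model spaces $\Gamma^{k,p}(E_u)$, being closed subspaces of finite products of $W^{k,p}(\Omega,\R^{\dim M})$, are Banach. Hence $\{(\Phi_u(\mathcal U_u),\Phi_u^{-1})\}_u$ is a $C^\infty$ Banach atlas and $W^{k,p}(\Omega,M)$ is a Banach manifold. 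Everything outside the $\Omega$-lemma is bookkeeping resting on the Sobolev embedding.
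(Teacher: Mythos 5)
Your proposal is the standard Palais construction (exponential charts on Sobolev sections of the pullback bundle, with smoothness of transition maps via the $\Omega$-lemma), which is exactly the route of \cite{palais:1968} that the paper cites for this lemma without reproducing a proof. The outline is correct, modulo the boundedness/compactness assumptions you state explicitly and the $\Omega$-lemma you defer to Palais, so it matches the paper's (cited) argument in essence.
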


The GFE method is a way to discretize such nonlinear function spaces.  Its central idea are generalizations of Lagrange interpolation
to interpolation of values given on a nonlinear manifold.

\subsection{Geodesic interpolation}
\label{sec:geodesic_interpolation}

\begin{figure}
 \begin{center}
 \begin{overpic}[width=0.7\textwidth]{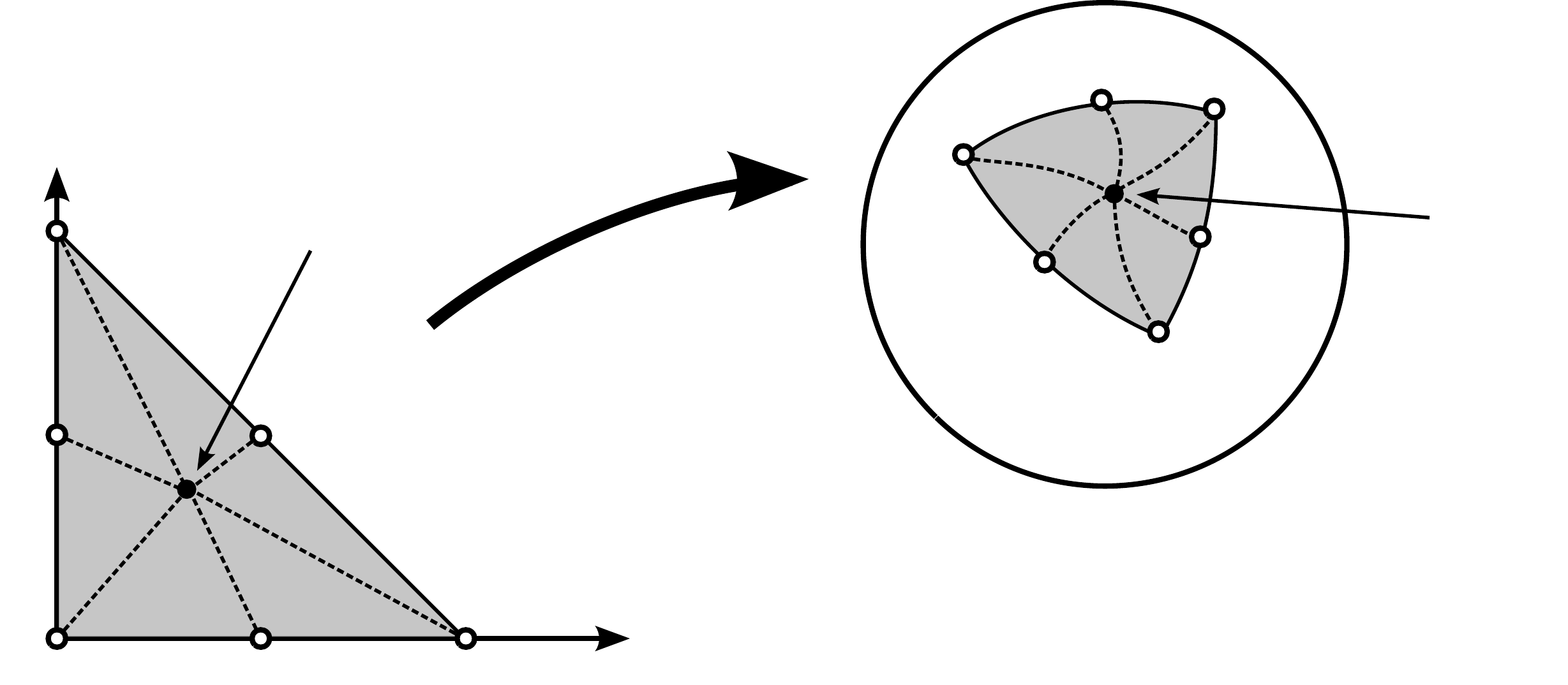}
  \put( 1, 0){$a_1$}
  \put(15, 0){$a_2$}
  \put(28, 0){$a_3$}
  \put(-1,15){$a_4$}
  \put(18,17){$a_5$}
  \put( 5,29){$a_6$}
  \put(75,20){$v_1$}
  \put(78,27){$v_2$}
  \put(79,36){$v_3$}
  \put(64,24){$v_4$}
  \put(69,39){$v_5$}
  \put(57,33){$v_6$}
  \put(20,29){$\xi$}
  \put(35,31){$\Upsilon^p$}
  \put(85,15){$M$}
  \put(92,28){$\Upsilon^\text{ge}(v,\xi)$}
 \end{overpic}
 \end{center}
 \caption{Second-order geodesic interpolation from the reference triangle into a sphere}
 \label{fig:geodesic_interpolation}
\end{figure}

The first approach to interpolation of values on $M$ is completely intrinsic.
Let $T_\text{ref}$ be a bounded domain in $\R^d$, with coordinates $\xi$.
We call $T_\text{ref}$ a reference element.
On its closure $\overline{T_\text{ref}}$ we have a set of
distinct Lagrange nodes $a_i, 1\le i \le m$, and corresponding scalar Lagrangian interpolation
functions $\varphi_1,\dots,\varphi_m$, i.e., $p$-th order polynomial functions with $\varphi_i(a_j) = \delta_{ij}$.
We assume that the $\varphi_i$ and $a_i$ are such that the corresponding interpolation
problem is well posed, i.e., for given $v_i \in \R$, $i=1,\dots,m$ there is a single function $\pi : T_\text{ref} \to \R$
in the span of the $\varphi_i$ such that $\pi(a_i) = v_i$ for all $1\le i \le m$.

We want to construct a function $\Upsilon^\text{ge} : T_\text{ref} \to M$ that interpolates a given set of values
$v_1,\dots,v_m \in M$.
The following definition has been given in~\cite{sander:2015} and \cite{grohs:2011}. It is visualized in Figure~\ref{fig:geodesic_interpolation}.
\begin{definition}
\label{def:geodesic_interpolation}
 Let $T_\text{ref} \subset \R^d$ be a bounded domain,
 and $M$ a connected, smooth, complete manifold with a distance metric $\dist(\cdot,\cdot) : M \times M \to \R$.
 Let $\varphi_1,\dots,\varphi_m$ be a set of  $p$-th order
 scalar Lagrangian shape functions, and let $v = (v_1,\dots,v_m) \in M^m$ be values at the
 corresponding Lagrange nodes.  We call
\begin{align*}
 \Upsilon^\text{ge} & \; : \; M^m \times T_\text{ref} \to M \\
 \Upsilon^\text{ge}(v,\xi) & \colonequals \argmin_{q \in M} \sum_{i=1}^m \varphi_i(\xi) \dist(v_i,q)^2
\end{align*}
$p$-th order geodesic interpolation on $M$.
For given $T_\text{ref}$, $a_1,\dots,a_m$, and $\varphi_1,\dots,\varphi_m$, the space of all such functions will be denoted by
$P_p^\text{ge}(M)$.
\end{definition}
We will look at the functions $\Upsilon^\text{ge}$ sometimes as functions of $\xi$ or of the $v_1,\dots,v_m$ only, and adapt
our notation accordingly.  The meaning should always be clear from the context.

As values of $\Upsilon^\text{ge}$ are minimizers of a functional
\begin{equation}
\label{eq:gfe_potential}
 f_{v,\xi} (q) \colonequals \sum_{i=1}^m \varphi(\xi)_i \dist(v_i,q)^2,
\end{equation}
they fulfill a first-order optimality
criterion.  For any $q \in M$, let $\log_q$ be the inverse of the exponential map of $M$ at $q$.
Then, we have
\begin{equation}
\label{eq:first_order_optimality}
 \sum_{i=1}^m \varphi_i(\xi) \log_{\Upsilon^\text{ge}(v,\xi)} v_i = 0,
\end{equation}
provided the quantities are close enough to each other so that $\log$ is defined~\cite[Thm.\,1.2]{karcher:1977}.

\bigskip

Important properties, like $C^\infty$-differentiability of $\Upsilon^\text{ge}$ with respect to all of its arguments,
have been shown in~\cite{sander:2012,sander:2015}.  Of central
importance for the construction of test functions is the well-posedness of the definition, which holds
if the nodal values $v_1,\dots,v_m$
are ``close together'' in a certain sense. The precise conditions for first-order functions
have been given by Karcher \cite{karcher:1977}.
Remember that a set $D \subset M$ is called convex if for
each $p, q \in D$ the minimizing geodesic from $p$ to $q$
is entirely contained in~$D$.

\begin{theorem}[Karcher \cite{karcher:1977}]
 \label{thm:unique_riemannian_barycenter}
 Let $M$ be complete, $\varphi_1, \dots, \varphi_m$ a first-order scalar Lagrange basis on a reference
 element $T_\text{ref}$, $B_\rho$ an open convex geodesic ball of radius $\rho$ in $M$,
 and $v_1,\dots,v_m \in B_\rho$.
 \begin{enumerate}
  \item If the sectional curvatures of $M$ in $B_\rho$ are bounded by
     a positive constant $K$, and $\rho < \frac{1}{4} \pi K^{-1/2}$, then
     the function $f_{v,\xi}$ defined in \eqref{eq:gfe_potential} has a unique minimizer in $B_\rho$ for all $\xi \in T_\text{ref}$.
    \item
     If the sectional curvatures of $M$ in $B_\rho$ are at most $0$,
     then $f_{v,\xi}$ has a unique minimizer in $B_\rho$ for all $\xi \in T_\text{ref}$.
 \end{enumerate}
\end{theorem}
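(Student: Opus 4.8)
The plan is to fix $\xi \in T_\text{ref}$, abbreviate $\varphi_i \colonequals \varphi_i(\xi)$, and use that a first-order Lagrange basis is a nonnegative partition of unity on $T_\text{ref}$, so $\varphi_i \ge 0$ and $\sum_{i=1}^m \varphi_i = 1$. Since $M$ is complete, Hopf--Rinow makes the closed ball $\overline{B_\rho}$ compact, and $f_{v,\xi}$ is continuous on it, so a minimizer over $\overline{B_\rho}$ certainly exists. The statement then reduces to two claims: every minimizer of $f_{v,\xi}$ over $\overline{B_\rho}$ lies in the open ball $B_\rho$, and $f_{v,\xi}$ has at most one critical point in $B_\rho$. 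I would derive both from a single geometric fact: under either curvature hypothesis, for every $p \in B_\rho$ the squared-distance function $q \mapsto \dist(p,q)^2$ is smooth and has positive-definite Hessian on $B_\rho$, hence is strictly geodesically convex there.

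To establish that fact I would write $r \colonequals \dist(p,\cdot)$ and start from the pointwise identity $\operatorname{Hess}(r^2)(X,X) = 2\langle X,\nabla r\rangle^2 + 2r\operatorname{Hess}(r)(X^\perp,X^\perp)$, with $X^\perp$ the component of $X$ orthogonal to $\nabla r$, valid on the normal ball $B_\rho$ (where $r^2$ is smooth because $B_\rho$ is convex and, in case (1), $\dist(p,q) < \pi K^{-1/2}$ excludes conjugate points). The Hessian comparison theorem then bounds the tangential term below by the model of constant curvature: if $\operatorname{sec} \le 0$ one gets $\operatorname{Hess}(r)(X^\perp,X^\perp) \ge r^{-1}\abs{X^\perp}^2$, hence $\operatorname{Hess}(r^2)(X,X) \ge 2\abs{X}^2 > 0$ with no restriction on $\rho$; if $\operatorname{sec} \le K$ with $K > 0$ one gets $\operatorname{Hess}(r)(X^\perp,X^\perp) \ge \sqrt K\cot(\sqrt K r)\abs{X^\perp}^2$, which is positive precisely when $r < \tfrac{1}{2}\pi K^{-1/2}$. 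Here the factor $\tfrac14$ in the hypothesis enters: any $p,q \in B_\rho$ satisfy $\dist(p,q) \le \dist(p,o) + \dist(o,q) < 2\rho < \tfrac{1}{2}\pi K^{-1/2}$, so $\sqrt K\,r\cot(\sqrt K r) \in (0,1]$ on $B_\rho$ and $\operatorname{Hess}(r^2)(X,X) \ge 2\min\{1,\sqrt K\,r\cot(\sqrt K r)\}\abs{X}^2 > 0$. I expect this comparison estimate, together with checking that $B_\rho$ is a normal ball of $p$, to be the real work; everything after it is bookkeeping.

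Given the fact, $f_{v,\xi} = \sum_i\varphi_i\dist(v_i,\cdot)^2$ is a nonnegative combination of functions with positive-definite Hessian, and $\sum_i\varphi_i = 1$ forces at least one positive weight, so $\operatorname{Hess} f_{v,\xi}$ is positive definite on $B_\rho$; thus $f_{v,\xi}$ is strictly convex along every geodesic segment of $B_\rho$ and has at most one critical point there, which is then its unique minimizer over $B_\rho$. To rule out the boundary, let $o$ be the center of $B_\rho$ and $q \in \partial B_\rho$, pick any $v_i$, and restrict the (already established, and convex up to the boundary by continuity) function $\dist(o,\cdot)^2$ to the minimizing geodesic $\gamma$ from $q$ to $v_i$, which stays in $\overline{B_\rho}$ by convexity of the ball; then $t \mapsto \dist(o,\gamma(t))^2$ is convex, equals $\rho^2$ at $t=0$ and $\dist(o,v_i)^2 < \rho^2$ at the far endpoint, so its derivative at $t=0$ is negative, which unwinds to $\langle\log_q o,\log_q v_i\rangle > 0$. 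Since $\nabla f_{v,\xi}(q) = -2\sum_i\varphi_i\log_q v_i$, the derivative of $f_{v,\xi}$ at $q$ in the inward direction $\log_q o$ equals $-2\sum_i\varphi_i\langle\log_q o,\log_q v_i\rangle < 0$, so $f_{v,\xi}$ strictly decreases as one moves into the ball from $q$; hence $q$ cannot minimize $f_{v,\xi}$ over $\overline{B_\rho}$. Combining the two claims, the minimizer over the compact set $\overline{B_\rho}$ lies in $B_\rho$, and being a critical point of a strictly geodesically convex function it is the unique minimizer in $B_\rho$; as $\xi \in T_\text{ref}$ was arbitrary, this covers both cases of the theorem.
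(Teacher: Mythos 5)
The paper does not prove this statement at all: it is quoted from Karcher's 1977 paper, so there is no internal proof to compare against. Your argument is, in substance, the classical Karcher-type proof (and the one found in the literature on Riemannian centers of mass): existence of a minimizer on the compact ball $\overline{B_\rho}$ by Hopf--Rinow, strict geodesic convexity of $q \mapsto \dist(v_i,q)^2$ on $B_\rho$ via Hessian comparison under the upper curvature bound (with the factor $\tfrac14$ entering exactly as you say, through $\dist(p,q) < 2\rho < \tfrac12\pi K^{-1/2}$ so that $\sqrt{K}\,r\cot(\sqrt K r) > 0$), nonnegativity and partition of unity of the first-order Lagrange basis to pass convexity to $f_{v,\xi}$ --- which is precisely the property that fails for $p \ge 2$ and is why the paper needs a separate Theorem~\ref{thm:well_posedness} there --- and a boundary argument showing the gradient $-2\sum_i \varphi_i \log_q v_i$ has negative inward derivative at $q \in \partial B_\rho$, consistent with \eqref{eq:first_order_optimality}. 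The computations (the splitting of $\operatorname{Hess}(r^2)$, the comparison bounds in the two curvature regimes, the first-variation step giving $\langle \log_q o, \log_q v_i\rangle > 0$ from $\dist(o,v_i) < \rho = \dist(o,q)$) are correct.

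Two small points you gloss over, neither fatal: smoothness of $\dist(p,\cdot)^2$ requires absence of cut points, and the paper's definition of convexity (``the minimizing geodesic'') only implicitly carries uniqueness of minimizing geodesics; and your ``convex up to the boundary by continuity'' step, as well as the gradient formula at $q \in \partial B_\rho$, need a short limiting argument (or a one-sided first-variation inequality, which suffices since you only need the inward derivative to be negative) because the hypotheses are stated on the open ball. Under the standard reading of ``convex geodesic ball'' these are routine, so I would count your proposal as a correct and essentially standard proof of the cited result rather than a different route.
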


Different arguments are needed to show corresponding results for Lagrange polynomials $\varphi_i$ of order~2
and higher, because such polynomials can take negative values.  A simple proof for the following
qualitative result is given in \cite{hardering:2015}.  A more quantitative result appears in~\cite{sander:2015}.

\begin{theorem}
\label{thm:well_posedness}
 Let $M$ be complete, and $\varphi_1, \dots, \varphi_m$ a scalar Lagrange basis on $T_\text{ref}$.
 Let $B_D \subset B_\rho$
 be two concentric geodesic balls in $M$ of radii $D$ and $\rho$, respectively.
 There are numbers $D$ and $\rho$ such that if the values $v_1, \dots, v_m$ are
 contained in $B_D$, then the function $f_{v,\xi} (q)$ defined in \eqref{eq:gfe_potential}
 has a unique minimizer in $B_\rho$.
\end{theorem}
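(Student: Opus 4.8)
The plan is to exploit that, although a $p$-th order Lagrange basis with $p\ge 2$ has sign-changing shape functions, it still reproduces constants, so that $\sum_{i=1}^m\varphi_i\equiv 1$, and its Lebesgue constant $\Lambda:=\max_{\xi\in\overline{T_\text{ref}}}\sum_{i=1}^m\abs{\varphi_i(\xi)}$ is finite and depends only on the reference element and the basis. One then treats $f_{v,\xi}$ as a difference of convex functions and dominates the wrong-sign part quantitatively. First I would fix the geometry of a small ball: since $M$ is a complete finite-dimensional manifold, Hopf--Rinow makes $\overline{B_\rho}$ compact, so the sectional curvature of $M$ is bounded above and below on $\overline{B_\rho}$, and shrinking $\rho$ below the convexity radius of its centre (positive by Karcher \cite{karcher:1977}) makes $B_\rho$ geodesically convex and keeps all distance functions involved smooth. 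Hessian comparison for the squared distance then produces constants $0<c_-(\rho)\le c_+(\rho)$ with
\[
 c_-(\rho)\, g \;\le\; \operatorname{Hess}\tfrac12\dist(v,\cdot)^2 \;\le\; c_+(\rho)\, g \quad\text{on }B_\rho
\]
for every $v\in\overline{B_\rho}$, and, crucially, $c_-(\rho)\to 1$ and $c_+(\rho)\to 1$ as $\rho\to 0$ (both arise from Jacobi-field quotients such as $\sqrt{\kappa}\,r\cot(\sqrt{\kappa}\,r)$ and $\sqrt{-\kappa}\,r\coth(\sqrt{-\kappa}\,r)$).

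Next I would establish strong geodesic convexity of $f_{v,\xi}$ on $B_\rho$, uniformly in $\xi$ and $v$. Since the Hessian is linear in its argument and the $\varphi_i(\xi)$ are scalars, $\operatorname{Hess}\tfrac12 f_{v,\xi}=\sum_{i=1}^m\varphi_i(\xi)\operatorname{Hess}\tfrac12\dist(v_i,\cdot)^2$. Writing $\operatorname{Hess}\tfrac12\dist(v_i,\cdot)^2 = c_-\,g + H_i$ with $0\le H_i\le (c_+-c_-)g$ and using $\sum_i\varphi_i(\xi)=1$ gives $\operatorname{Hess}\tfrac12 f_{v,\xi}\ge\bigl(c_- - \Lambda(c_+-c_-)\bigr)g$ on $B_\rho$. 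Since $c_-(\rho),c_+(\rho)\to 1$, the quantity $\mu:=c_- - \Lambda(c_+-c_-)\to 1$, so I can fix $\rho$ small enough that $\mu>0$. Then $f_{v,\xi}$ is strongly geodesically convex on $B_\rho$ with modulus $\mu$ for all $\xi\in\overline{T_\text{ref}}$ and all $v_1,\dots,v_m\in\overline{B_\rho}$, so it has at most one minimizer in the convex set $B_\rho$.

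With $\rho$ fixed I would then choose the smaller radius $D$ so that the minimizer cannot escape to the boundary. Let $c$ be the common centre of the two balls. For $v_1,\dots,v_m\in B_D$ one has $f_{v,\xi}(c)=\sum_i\varphi_i(\xi)\dist(v_i,c)^2\le\sum_{i:\varphi_i(\xi)\ge 0}\varphi_i(\xi)D^2\le\Lambda D^2$, whereas for $q\in\partial B_\rho$ the triangle inequality gives $\rho-D\le\dist(v_i,q)\le\rho+D$, and splitting into positive and negative weights together with $\sum_i\varphi_i=1$ yields $f_{v,\xi}(q)\ge(\rho-D)^2-4\Lambda\rho D$. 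Choosing $D$ small enough that $\Lambda D^2<(\rho-D)^2-4\Lambda\rho D$ forces $\min_{\overline{B_\rho}}f_{v,\xi}<\min_{\partial B_\rho}f_{v,\xi}$; since $f_{v,\xi}$ is continuous and $\overline{B_\rho}$ compact, the minimum over $\overline{B_\rho}$ is attained, hence attained in the open ball $B_\rho$. Together with the uniqueness from the previous step this proves the theorem, with $D$ and $\rho$ depending only on the curvature bounds near the centre, the convexity radius, and $\Lambda$.

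The main obstacle is the strong-convexity step: unlike the first-order case of Theorem~\ref{thm:unique_riemannian_barycenter}, $f_{v,\xi}$ is not a sum of convex functions with nonnegative weights, so convexity has to be recovered quantitatively. The key point is that the defect $c_+-c_-$ in the Hessian comparison vanishes with $\rho$, hence can be made small enough to beat the fixed ``bad'' constant $\Lambda$ — this is precisely why $\rho$ must be taken sufficiently small, and then $D$ small relative to $\rho$. A secondary technical point is keeping all estimates uniform in $\xi\in\overline{T_\text{ref}}$ and in the data $v$, which is exactly what passing to the single number $\Lambda$ accomplishes.
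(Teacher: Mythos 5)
Your argument is correct. Note that the paper itself gives no proof of this theorem: it defers to the cited literature, attributing a ``simple qualitative proof'' to \cite{hardering:2015} and a quantitative one to \cite{sander:2015}. Your route is essentially the quantitative one of \cite{sander:2015}: control the Hessian of $\tfrac12\dist(v_i,\cdot)^2$ from both sides by comparison constants $c_\pm(\rho)\to 1$, absorb the sign-changing shape functions via the Lebesgue constant $\Lambda$ (using $\sum_i\varphi_i\equiv 1$) to get uniform strong geodesic convexity on $B_\rho$, and then pick $D$ small enough that the value at the centre undercuts the value on $\partial B_\rho$, so the minimizer over the compact closure is interior. All the individual estimates check out (in fact $\sum_{\varphi_i<0}\abs{\varphi_i}\le(\Lambda-1)/2$, so your bounds are slightly generous but valid), and the order of quantifiers --- first $\rho$ small for convexity, then $D$ small relative to $\rho$ for localization --- matches the statement. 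The alternative ``simple'' proof in \cite{hardering:2015} instead perturbs from the degenerate configuration $v_1=\dots=v_m$, where the minimizer and the positive-definiteness of the Hessian are obvious, and uses continuity and compactness; your approach buys explicit constants in exchange for the Hessian comparison machinery.
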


\subsection{Projection-based interpolation}

An alternative generalization of Lagrange interpolation uses embedding spaces and projections.
Let again $\imath : M \to \R^N$ be an isometric embedding, and let $\mathcal{P} : \R^N \supset U \to M$
be a projection from a neighborhood $U$ of $\imath(M)$ onto $M$.  Let $T_\text{ref}$, $a_i$,
$\varphi_i$, and $v_i$, $i=1,\dots,m$, be as in the previous section.  Then we can interpolate between the values $v_i$
by first interpolating in the embedding space, and then projecting pointwise onto $M$.
\begin{definition}[\cite{grohs_sprecher:2013,sprecher:2016}]
\label{def:projection_based_interpolation}
 Let $T_\text{ref} \subset \R^d$ be a bounded domain,
 $M$ a connected manifold with an embedding $\imath : M \to \R^N$, and let $\mathcal{P}$ be a projection
 from a neighborhood $U$ of $\imath(M)$ onto $M$.
 Let $\varphi_1,\dots,\varphi_m$ be a set of  $p$-th order
 scalar Lagrangian shape functions, and let $v = (v_1,\dots,v_m) \in M^m$ be values at the
 corresponding Lagrange nodes.  We call
\begin{align}
 \nonumber
 \Upsilon^\text{pr} & \; : \; M^m \times T_\text{ref} \to M \\
 \label{eq:projection_based_interpolation}
 \Upsilon^\text{pr}(v,\xi) & \colonequals \mathcal{P}\Big[\sum_{i=1}^m \varphi_i(\xi) \imath(v_i)\Big]
\end{align}
projection-based interpolation on $M$.
For given $T_\text{ref}$ and $\varphi_1,\dots,\varphi_m$, the space of all such functions will be denoted by
$P_p^\text{pr}(M)$.
\end{definition}
Like geodesic interpolation, projection-based interpolation is usually not defined for all combinations of
values $v_1,\dots,v_m \in M$.  The reason is that it is frequently impossible to define continuous projections
$\mathcal{P}$ onto $M$
on all of $\R^N$.  The intuition that interpolation is well-defined if the $v_1,\dots,v_m$ are close to each
still holds, but no more precise results are currently available.

\citeauthor{sprecher:2016} showed in~\cite{sprecher:2016} that projection-based interpolation can be interpreted as geodesic interpolation
if $M$ is equipped with the distance metric of the Euclidean embedding space.
\begin{lemma}[{\cite[Prop.\,1.4.4]{sprecher:2016}}]
 Let $\mathcal{P}$ be the closest-point projection from $\R^N$ onto $M$.  With the notation of
 Definitions~\ref{def:geodesic_interpolation} and~\ref{def:projection_based_interpolation} we have
 \begin{equation*}
  \Upsilon^\text{pr}(v,\xi)
  \colonequals
  \mathcal{P}\Big[\sum_{i=1}^m \varphi_i(\xi) \imath(v_i)\Big]
  =
  \argmin_{q \in M} \sum_{i=1}^m \varphi_i(\xi) \norm{\imath(v_i) - \imath(q)}^2.
 \end{equation*}
\end{lemma}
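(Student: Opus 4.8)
The plan is a one-line \emph{completing the square} computation in the embedding space $\R^N$, after observing that the scalar Lagrangian shape functions form a partition of unity.

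First I would record that $\sum_{i=1}^m \varphi_i(\xi) = 1$ for every $\xi \in T_\text{ref}$: the constant function $1$ lies in the span of the $\varphi_i$ (it is a polynomial of order $p$), and interpolating the nodal values $(1,\dots,1)$ together with well-posedness of the interpolation problem identifies this interpolant with $\sum_i \varphi_i$. Writing $z \colonequals \sum_{i=1}^m \varphi_i(\xi)\,\imath(v_i) \in \R^N$, I would then expand, for an arbitrary $q \in M$,
\begin{align*}
 \sum_{i=1}^m \varphi_i(\xi)\,\norm{\imath(v_i)-\imath(q)}^2
 &= \sum_{i=1}^m \varphi_i(\xi)\Big(\norm{\imath(v_i)}^2 - 2\langle \imath(v_i),\imath(q)\rangle + \norm{\imath(q)}^2\Big) \\
 &= \Big(\sum_{i=1}^m \varphi_i(\xi)\,\norm{\imath(v_i)}^2\Big) - 2\langle z,\imath(q)\rangle + \norm{\imath(q)}^2 \\
 &= \norm{\imath(q) - z}^2 + \underbrace{\sum_{i=1}^m \varphi_i(\xi)\,\norm{\imath(v_i)}^2 - \norm{z}^2}_{=:\,c(v,\xi)},
\end{align*}
where the partition-of-unity property was used to pull $\sum_i\varphi_i(\xi)$ through the $\norm{\imath(q)}^2$ term and to collapse the linear term to $\langle z,\imath(q)\rangle$. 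The quantity $c(v,\xi)$ is independent of $q$.

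Hence the two functions $q \mapsto \sum_i \varphi_i(\xi)\norm{\imath(v_i)-\imath(q)}^2$ and $q\mapsto \norm{\imath(q)-z}^2$ on $M$ differ by an additive constant, so they share the same set of minimizers. Since $\argmin_{q\in M}\norm{\imath(q)-z}^2$ is by definition the point of $\imath(M)$ closest to $z$, it equals $\mathcal{P}[z]=\mathcal{P}\big[\sum_i \varphi_i(\xi)\imath(v_i)\big]=\Upsilon^\text{pr}(v,\xi)$, which is the assertion.

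There is essentially no obstacle beyond bookkeeping. The one point deserving a remark is that for $p\ge 2$ some weights $\varphi_i(\xi)$ may be negative, so neither side is a sum of nonnegative terms and no convexity argument is available; but the identity above is purely algebraic and insensitive to the signs of the $\varphi_i(\xi)$, so it goes through unchanged. Well-definedness of both sides is exactly the condition $z\in U$ under which $\mathcal{P}$, and hence $\Upsilon^\text{pr}$, is defined — inherited from Definition~\ref{def:projection_based_interpolation} — and this simultaneously guarantees that the right-hand $\argmin$ is attained and unique.
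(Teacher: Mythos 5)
Your completing-the-square argument is correct: the partition-of-unity identity $\sum_i\varphi_i(\xi)=1$ (which is genuinely needed, and which you justify properly from the well-posedness assumption on the Lagrange basis) reduces the weighted functional to $\norm{\imath(q)-z}^2$ plus a $q$-independent constant, and $\argmin_{q\in M}\norm{\imath(q)-z}^2=\mathcal{P}[z]$ is the definition of the closest-point projection. The paper states this lemma without proof, citing Sprecher's thesis where essentially this same computation appears, so your argument is the standard one and there is nothing further to compare.
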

Nevertheless, the direct definition~\eqref{eq:projection_based_interpolation} is advantageous if an
easily computable $\mathcal{P}$ is available.  In that case, \eqref{eq:projection_based_interpolation} is a
straightforward way
to compute the interpolation function, unlike the implicit construction used in Definition~\ref{def:geodesic_interpolation}.
Also, it is quite obvious from~\eqref{eq:projection_based_interpolation} that $\Upsilon^\text{pr}$
is differentiable whenever $\mathcal{P}$ is, which is more difficult to show for geodesic interpolation.

Useful embeddings are available for a number of important spaces.  If $M$ is the unit sphere,
$\mathcal{P}$ is easily evaluated as
\begin{equation*}
 \mathcal{P}(w) = \frac{w}{\norm{w}}.
\end{equation*}
The interpolation polynomial $\Upsilon^\text{pr}$ is then well-defined unless $\sum_{i=1}^m \varphi_i(\cdot) \imath(v_i)$
has a zero.  The derivative of $\Upsilon^\text{pr}$ with respect to $\xi$ is
\begin{align*}
 \frac{\partial}{\partial \xi}\Upsilon^\text{pr}(v,\xi)
 =
 \frac{\partial \mathcal{P}(w)}{\partial w}\bigg|_{w = \sum_{i=1}^m \varphi_i(\xi) \imath(v_i)}
 \cdot
 \sum_{i=1}^m \frac{\partial \varphi_i}{\partial \xi} \imath(v_i),
\end{align*}
where
\begin{equation*}
 \frac{\partial \mathcal{P}(w)}{\partial w}
 =
 I\norm{w}^{-1} - ww^T\norm{w}^{-3}.
\end{equation*}

For the special orthogonal group $\text{SO}(N) \subset \R^{N \times N}$, the closest-point projection in the
Frobenius norm is the polar decomposition~\cite{neff_lankeit_madeo:2014}.  Closed-form expressions for the
polar factor of a given matrix $A$ exist~\cite{jog:2002}, but it is more convenient to compute it and its derivatives through
the iteration defined by $Q_0\colonequals A$ and
\begin{equation*}
Q_{k+1}\colonequals \frac{1}{2}\left(Q_k+Q_k^{-T} \right),
\end{equation*}
which converges quadratically to the polar factor of $A$~\cite{higham:1986}.
Alternatively, one may consider different projections like Gram--Schmidt orthogonalization
or QR decomposition.

Some manifolds like the set of all symmetric positive definite $N \times N$ matrices form open subsets of Euclidean spaces.
In such a case no natural projection is available.

\subsection{Global geometric finite element spaces}

Given an interpolation rule from one of the two previous sections, it is easy to construct global finite element
spaces.  Most of what follows in the rest of this paper is independent of whether geodesic or projection-based
interpolation is used.  We write $\Upsilon$ to mean either one of $\Upsilon^\text{ge}$ and $\Upsilon^\text{pr}$,
and $P_p$ for the corresponding spaces of generalized polynomials.

Let $\Omega$ be an open bounded subset of $\R^d$, $d \ge 1$.
For simplicity we assume that $\Omega$ has a polygonal boundary.  Let $\mathcal{G}$
be a conforming grid for $\Omega$ with elements of arbitrary type.
We denote by $x_i \in \Omega$, $i=1,\dots,n$ the union of the sets of Lagrange nodes
of the individual elements.

\begin{definition}[Geometric finite elements~\cite{sander:2012,sander:2015,sprecher:2016}]
\label{def:geodesic_finite_elements}
We call $v_h : \Omega \to M$
a $p$-th order geometric finite element function if it is continuous, and if
for each element $T \in \mathcal{G}$ the restriction $v_h|_T$ is a $p$-th order
geometric interpolation in the sense that
\begin{equation*}
 v_h|_T(x) = \Upsilon \big(v_{T,1}, \dots, v_{T,m}; \mathcal{F}_T(x) \big),
\end{equation*}
where $\mathcal{F}_T : T \to T_\text{ref}$ is affine or multilinear, $T_\text{ref}$ is the
reference element corresponding to $T$, and the $v_{T,i}$ are values in $M$.
  The space of all such functions $v_h$ (for a fixed $p$) will be denoted by $V_h^M$.
\end{definition}

Geometric finite element functions are Sobolev functions in the sense of Definition~\ref{def:sobolev_space}.
The following conformity result was shown in \cite{sander:2012}.
\begin{theorem}
\label{thm:conformity}
 $V_h^M(\Omega) \subset H^1(\Omega,M).$
\end{theorem}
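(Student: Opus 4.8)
The plan is to peel the statement down, via Definition~\ref{def:sobolev_space}, to two claims: that $\imath\circ v_h$ lies in $W^{1,2}(\Omega,\R^N)$, and that $v_h(x)\in\imath(M)$ for almost every $x\in\Omega$. The second claim needs no work: by Definition~\ref{def:geodesic_finite_elements} a GFE function takes values in $M$ at \emph{every} point of $\Omega$, so the pointwise constraint holds everywhere, not merely almost everywhere. Thus the whole argument is about the first-order Sobolev regularity of the $\R^N$-valued map $\imath\circ v_h$, and I would run the classical ``piecewise smooth $+$ globally continuous $\Rightarrow$ $H^1$'' argument adapted to GFE functions.

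First I would settle elementwise regularity. Fix $T\in\mathcal{G}$ with reference element $T_\text{ref}$ and transformation $\mathcal{F}_T:T\to T_\text{ref}$; by Definition~\ref{def:geodesic_finite_elements} we have $v_h|_T=\Upsilon(v_{T,1},\dots,v_{T,m};\,\cdot\,)\circ\mathcal{F}_T$ for fixed nodal values $v_{T,i}\in M$, and this interpolation is well defined by hypothesis (cf.\ Theorem~\ref{thm:well_posedness}). The map $\xi\mapsto\Upsilon(v_{T,1},\dots,v_{T,m};\xi)$ is $C^\infty$ on $T_\text{ref}$ — for geodesic interpolation by the differentiability properties recalled in Section~\ref{sec:geodesic_interpolation}, and for projection-based interpolation directly from \eqref{eq:projection_based_interpolation} as soon as $\mathcal{P}$ is smooth — while $\mathcal{F}_T$ is affine or multilinear, hence smooth. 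Composing and applying $\imath$, and using compactness of $\overline{T}$, I obtain $\imath\circ v_h|_T\in W^{1,\infty}(T,\R^N)\subset W^{1,2}(T,\R^N)$ with a classical derivative that is bounded on $T$. Running this over the finitely many elements of $\mathcal{G}$ produces a candidate derivative $g\in L^2(\Omega,\R^{N\times d})$, defined elementwise and unambiguous off the grid skeleton $\Sigma:=\bigcup_{T}\partial T$, which is a Lebesgue null set.

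Then I would glue. Testing a scalar component $w:=(\imath\circ v_h)_\ell$ against $\phi\in C_c^\infty(\Omega)$, I split $\int_\Omega w\,\partial_j\phi$ into element contributions and integrate by parts on each $T$ — legitimate because $w|_T$ extends smoothly to $\partial T$ and $\partial T$ is Lipschitz — getting $\int_T w\,\partial_j\phi = -\int_T(\partial_j w)\,\phi + \int_{\partial T} w\,\phi\,(n_T)_j$. Summing over the elements, each interior face of $\mathcal{G}$ is shared by exactly two elements whose outer normals $n_T$ are opposite, and the two traces of $w$ on that face agree because $v_h$ is \emph{globally continuous} by Definition~\ref{def:geodesic_finite_elements}; hence the face terms cancel in pairs, while the pieces of $\partial T$ lying on $\partial\Omega$ drop out since $\phi$ is compactly supported. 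What remains is $\int_\Omega w\,\partial_j\phi = -\int_\Omega g_{\ell j}\,\phi$, identifying $g$ as the weak gradient of $\imath\circ v_h$. Together with $\imath\circ v_h\in L^2(\Omega,\R^N)$ — it is bounded on the bounded set $\Omega$, each $v_h|_T(\overline{T})$ being compact — this gives $\imath\circ v_h\in W^{1,2}(\Omega,\R^N)$, hence $v_h\in H^1(\Omega,M)$.

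The routine part is Step~1 together with the bookkeeping in Step~2; the one genuinely load-bearing point, which I would be careful to write out in full, is the pairwise cancellation of the inter-element face integrals, i.e.\ that $\imath\circ v_h$ carries no distributional derivative concentrated on the skeleton $\Sigma$. This is exactly the place where the continuity requirement in the definition of $V_h^M$ is consumed, and it is the reason GFE functions are genuinely $H^1$-conforming rather than merely broken-Sobolev; the conclusion would fail if $v_h$ were only assumed continuous at the Lagrange nodes.
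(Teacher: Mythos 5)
The paper does not prove Theorem~\ref{thm:conformity} itself; it only cites \cite{sander:2012}, where the result is established by exactly the argument you give: elementwise smoothness of $\Upsilon$ composed with the (smooth) element transformation, plus global continuity, yields a piecewise-$W^{1,\infty}$, globally continuous map whose distributional gradient has no contribution on the grid skeleton. Your proof is correct and matches that standard route, including correctly identifying the inter-element trace cancellation as the step that consumes the continuity requirement in Definition~\ref{def:geodesic_finite_elements}.
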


\bigskip

\begin{figure}
 \begin{center}
  \includegraphics[width=0.48\textwidth]{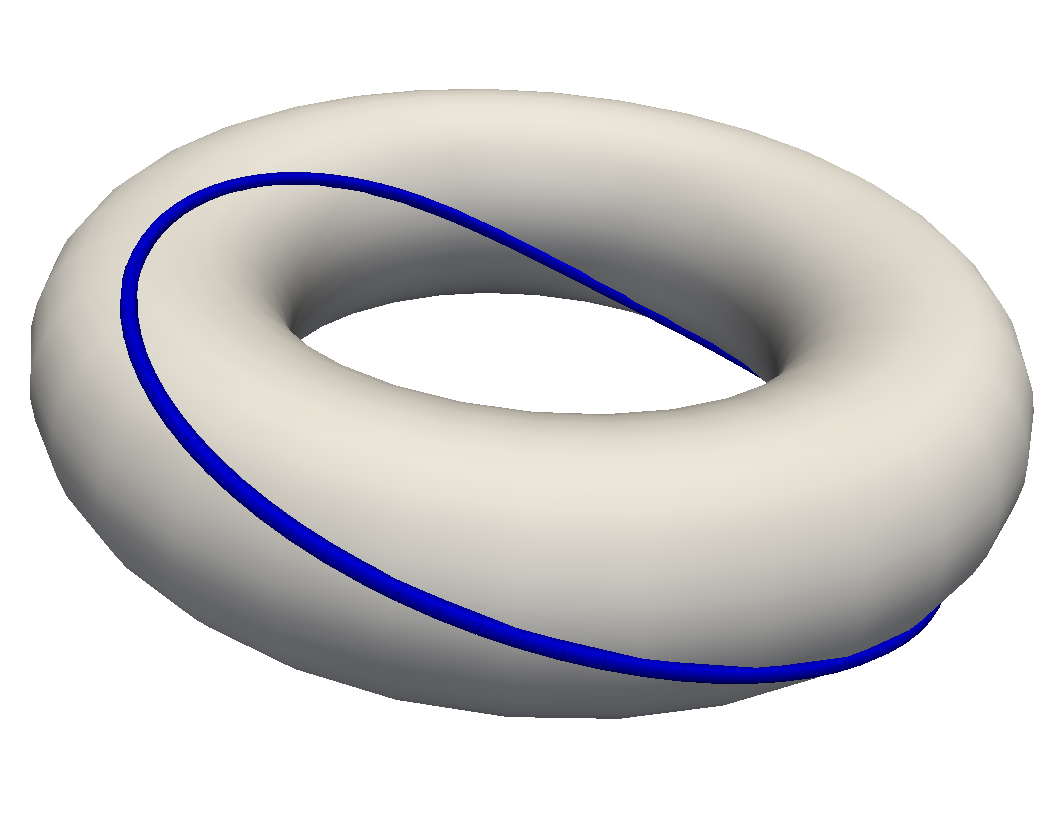}
 \end{center}
 \caption{Illustration of the global structure of the simple-most GFE space.  Let $\Omega = [0,1]$ be discretized
   by a single element, set $p=1$ and $M = S^1$.  Then the algebraic configuration space is $S^1 \times S^1$, which is isomorphic
   to a torus in $\R^3$.  The discrete space $V_h^{S^1}$ is isomorphic to that torus, except for all configurations
   $(v_1,v_2) \in S^1 \times S^1$ with $v_1 = -v_2$.    For each such configuration, there are two
   geodesic FE functions interpolating the values $v_1$ and $v_2$, and hence the torus has two ``sheets'' there.
   On the other hand, no projection-based interpolation exists for these configurations at all.
   The problematic configurations form a closed path which circles once
   around the torus, illustrated by the blue line.}
 \label{fig:gfe_space_is_not_a_manifold}
\end{figure}

We now briefly discuss aspects of the global structure of the set $V_h^M$.  Partial results can be obtained
by using its relationship to the product space $M^n \colonequals \prod_{i=1}^n M$ (the ``algebraic'' space).
However, the relationship between geometric finite element
functions $v_h \in V_h^M$ and sets of coefficients $\bar{v} \in M^n$ is more subtle than in the linear case,
where the two are isomorphic to each other.
The problem is that the coefficients may be such that the interpolation problem is not well posed on all
grid elements. We try to illustrate this effect in Figure~\ref{fig:gfe_space_is_not_a_manifold} for $M = S^1$,
$d=1$, $p=1$, with a grid $\mathcal{G}$ consisting of a single element.
Here, the algebraic space is the torus $S^1 \times S^1$, but the corresponding GFE space $V_h^{S^1}$
is larger if geodesic interpolation is used, and smaller for projection-based interpolation.

To formally investigate the relationship we define the nodal evaluation operator
\begin{align*}
 \mathcal{E} & \; : \; V_h^M \to M^n \\
 (\mathcal{E} (v_h))_i & = v_h(x_i),
 \qquad \text{$x_i$ the $i$-th Lagrange node of $\mathcal{G}$.}
\end{align*}
To each geometric finite element function $v_h \in V_h^M$ it associates the set of
function values at the Lagrange nodes.  Since functions in $V_h^M$ are continuous,
the operator $\mathcal{E}$ is well-defined and single-valued for all $v_h \in V_h^M$.

For traditional finite elements with values in a linear space, the evaluation operator is an isomorphism.
In particular, its inverse $\mathcal{E}^{-1}$, which associates finite element functions to a given
set of coefficients, exists everywhere, and is single-valued.  This does not hold for geometric finite elements.
For arbitrary $\bar{v} \in M^n$, the operator $\mathcal{E}^{-1}$ may be multi-valued,
or may not exist at all.  For geodesic interpolation, Theorem~\ref{thm:well_posedness} allows to characterize
the sets of coefficients for which $\mathcal{E}^{-1}$ is single-valued.
See \cite{sander:2015} for details.

However, the algorithmic treatment of finite element functions can only
work by manipulating an algebraic representation of finite element functions.  We therefore restrict
our attention to the set where $\mathcal{E}^{-1}$ is defined and single-valued
\begin{equation*}
 \widetilde{\mathcal{M}}
 \colonequals
 \big\{ \bar{v} \in M^n \; : \; \text{$\mathcal{E}^{-1}$ is defined and single-valued at $\bar{v}$} \big\}.
\end{equation*}
It is currently an open problem whether the set $\widetilde{\mathcal{M}}$ is open.
To be able to argue with manifold properties of the algebraic space, we restrict our attention
even further, to the interior of $\widetilde{\mathcal{M}}$
\begin{equation*}
 \mathcal{M}
 \colonequals
 \operatorname{int} \widetilde{\mathcal{M}}.
\end{equation*}
As an open subset of a manifold, $\mathcal{M}$ is a manifold itself.  It is unclear whether $\mathcal{M}$
is connected, but we will not use connectedness.
However, for geodesic interpolation the following theorem
shows that $\mathcal{M}$ has a nonempty interior, if the grid is fine enough.

\begin{theorem}[\cite{sander:2015}]
\label{thm:bijection_functions_coefficients_on_fine_grids}
Let $M$ be a Riemannian manifold, and let $v : \Omega \to M$ be Lipschitz continuous
in the sense that there exists a constant $L$ such that
\begin{equation*}
 \dist(v(x), v(y)) \le L\norm{x-y}
\end{equation*}
for all $x,y \in \Omega$.  Let $\mathcal{G}$ be a grid of $\Omega$ and $h$
the length of the longest edge of $\mathcal{G}$.  Let $\mathcal{E}^\text{ge}$ be the evaluation operator
of geodesic finite elements.  Let $\bar{v}$ be the set of value of $v$ at the Lagrange nodes.
For $h$ small enough, the inverse of $\mathcal{E}^\text{ge}$ has only a single value in $V_h^M$
for each $\tilde{v} \in M^n$ in a neighborhood of $\bar{v}$.
\end{theorem}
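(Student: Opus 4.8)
The plan is to prove the slightly stronger statement that $\bar v \in \mathcal{M} = \operatorname{int}\widetilde{\mathcal{M}}$ as soon as $h$ is small enough; the assertion of the theorem is then precisely what membership in $\mathcal{M}$ encodes. The only substantial input is Theorem~\ref{thm:well_posedness}: for a fixed reference element and Lagrange basis it supplies radii $D,\rho>0$ such that, whenever the $m$ nodal values lie in a common geodesic ball $B_D$, the functional $f_{v,\xi}$ of~\eqref{eq:gfe_potential} has a unique minimizer in $B_\rho$, so that the geodesic interpolation on that element (understood as this minimizer) is well posed. Everything then reduces to arranging that, for $h$ small, the nodal values on every element of $\mathcal{G}$ --- and on every element after a small perturbation of $\bar v$ --- cluster inside such a ball, with $D$ and $\rho$ that can be chosen uniformly.

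First I would pass to a compact setting. Since $v$ is Lipschitz on the bounded set $\Omega$ it is uniformly continuous, hence (using completeness of $M$) extends to a Lipschitz map on $\overline\Omega$, so $K \colonequals v(\overline\Omega)$ is compact in $M$ and the bound $\dist(v(x),v(y)) \le L\norm{x-y}$ holds for all $x,y \in \overline\Omega$. On a relatively compact neighbourhood of $K$ the sectional curvatures of $M$ are bounded and the convexity radius is bounded below, so the constants $D,\rho$ produced by Theorem~\ref{thm:well_posedness} may be taken independent of where in $K$ the ball $B_D$ is centred. As only finitely many reference elements and Lagrange bases occur, replacing $D$ by the smallest and $\rho$ by the largest of the resulting radii yields a single pair $D,\rho>0$ valid for all elements.

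Next I would bring in the mesh width. Under the standing assumption that the element shapes are controlled so that $\operatorname{diam}T \le c_0 h$ for every $T \in \mathcal{G}$, any two Lagrange nodes $x_{T,i},x_{T,j}$ of $T$ satisfy $\norm{x_{T,i}-x_{T,j}} \le c_0 h$, whence $\dist\big(v(x_{T,i}),v(x_{T,j})\big) \le L c_0 h$. Choosing $h$ so small that $L c_0 h < D/2$, the $m$ nodal values $\bar v_{T,i}\colonequals v(x_{T,i})$ of every element $T$ lie in the geodesic ball of radius $D/2$ about $\bar v_{T,1}\in K$, and Theorem~\ref{thm:well_posedness} makes the geodesic interpolation well posed on every $T$. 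Since the restriction of $\Upsilon^\text{ge}$ to a face of $T_\text{ref}$ is again the geodesic interpolation of the nodal values carried by that face, the element interpolants agree on shared faces and assemble into a single continuous $v_h \in V_h^M$ with $\mathcal{E}^\text{ge}(v_h)=\bar v$; hence $\bar v \in \widetilde{\mathcal{M}}$.

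It remains to produce the neighbourhood. We deliberately established the slack bound ``radius $\le D/2$'' for $\bar v$, so if $\tilde v \in M^n$ satisfies $\dist(\tilde v_i,\bar v_i) < D/4$ for every $i$, then on each element the perturbed values $\tilde v_{T,i}$ still lie in a geodesic ball of radius $3D/4 < D$; the interpolation remains well posed on every element, and $(\mathcal{E}^\text{ge})^{-1}(\tilde v)$ exists and is single-valued. Thus a whole neighbourhood of $\bar v$ is contained in $\widetilde{\mathcal{M}}$, i.e.\ $\bar v \in \mathcal{M}$, which is exactly the assertion. I expect the main obstacle to be the uniformity step of the second paragraph: extracting curvature and convexity-radius bounds on the compact set $K$ that render the constants of Theorem~\ref{thm:well_posedness} independent both of the element and of the small perturbation. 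The remainder is routine bookkeeping with $L$ and $h$, once one grants the harmless hypothesis that $\mathcal{G}$ uses finitely many element types with $\operatorname{diam}T$ comparable to $h$.
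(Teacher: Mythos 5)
The paper itself does not prove this theorem --- it is imported from \cite{sander:2015} --- so there is no in-paper proof to compare against; I am judging your argument on its own. Your overall strategy is the right one: Lipschitz continuity forces the nodal values of every element into a geodesic ball of radius $O(Lh)$, Theorem~\ref{thm:well_posedness} then makes the elementwise interpolation well posed, compactness of $v(\overline\Omega)$ renders the radii $D,\rho$ uniform over the image, and the deliberate factor-of-two slack yields a whole neighbourhood of $\bar v$ inside $\widetilde{\mathcal{M}}$, hence $\bar v\in\mathcal{M}$. The gluing step (the restriction of $\Upsilon^{\text{ge}}$ to a face is the geodesic interpolation of the face data, because the shape functions of the remaining Lagrange nodes vanish there) is also sound and is the same device used in \cite{sander:2012} to establish continuity of global GFE functions.

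There is, however, one genuine gap. Theorem~\ref{thm:well_posedness} gives you a \emph{unique minimizer of $f_{\tilde v,\xi}$ in $B_\rho$}, whereas single-valuedness of $(\mathcal{E}^{\text{ge}})^{-1}$ requires that the $\argmin$ over \emph{all of $M$} in Definition~\ref{def:geodesic_interpolation} be a singleton: a second global minimizer lying outside $B_\rho$ would produce a second, equally legitimate geometric finite element function with the same nodal values. This is precisely the phenomenon in Figure~\ref{fig:gfe_space_is_not_a_manifold}: for antipodal data on $S^1$ each of the two interpolants is the unique minimizer in a small ball around itself, and yet $\mathcal{E}^{-1}$ is two-valued. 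To close the gap you must show that once the data lie in $B_D$ with $D$ small, every global minimizer of $f_{\tilde v,\xi}$ already lies in $B_\rho$. For first-order elements this is an elementary coercivity estimate: the weights are nonnegative and sum to one, so for $\dist(v_1,q)\ge 2D$ one has $f_{\tilde v,\xi}(q)\ge(\dist(v_1,q)-2D)^2$ while $f_{\tilde v,\xi}(v_1)\le 4D^2$, confining all global minimizers to $B_{4D}(v_1)$, after which uniqueness in $B_\rho$ with $\rho\ge 5D$ finishes the job. For higher order the weights $\varphi_i(\xi)$ can be negative and this argument breaks down; there you need the quantitative form of the well-posedness result in \cite{sander:2015} or \cite{hardering:2015}, which asserts that the minimizer found in $B_\rho$ is in fact the global one. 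With that supplement, and the harmless shape-regularity hypothesis you already flag, your argument is complete.
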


Similar results can be shown for projection-based interpolation.
They imply that for a given problem with a Lipschitz-continuous
solution we can always find a grid fine enough such
that we can disregard the distinction between $V_h^M$ and $M^n$ in the vicinity
of the solution.
Hence locally a geometric finite element problem can be represented by a corresponding
algebraic problem on the product manifold $M^n$.
In numerical experiments, this requirement of locality does not appear to pose a serious
obstacle.

Locally around functions where Theorem~\ref{thm:bijection_functions_coefficients_on_fine_grids} applies,
the function space $V_h^M$ inherits the differentiable manifold structure of
$\mathcal{M} \subset M^n$, because functions defined by geodesic or projection-based interpolation depend
differentiably on their corner values \cite[Thm.\,4.1]{sander:2015}.
In an abuse of notation, we will treat $V_h^M$ as a manifold itself.

\section{Test function spaces}
\label{sec:test_function_spaces}

To motivate our construction of test functions we briefly revisit the theory of linear elliptic partial differential equations.
Consider the linear reaction--diffusion equation
\begin{equation*}
 - \Delta u + u = f
 \qquad
 \text{on $\Omega$},
\end{equation*}
for a given function $f \in L^2(\Omega)$.  The weak formulation is
\begin{equation}
\label{eq:weak_reaction_diffusion}
 a(u,v) = (f,v)_{L^2(\Omega)},
\end{equation}
where
\begin{equation*}
 a(w,v)
 \colonequals
 \int_\Omega \nabla w \nabla v\,dx + \int_\Omega wv \,dx
\end{equation*}
is a bilinear form on $H^1(\Omega)$.  Of its two arguments, the second one is called a {\em test function}.

The test function $v$ is to be interpreted as a small variation around $w$.  Indeed, solutions of~\eqref{eq:weak_reaction_diffusion}
are minimizers of the functional
\begin{equation*}
 \mathcal{J} : H^1(\Omega) \to \R,
 \qquad
 \mathcal{J}(w) \colonequals \frac{1}{2} a(w,w) - (f,w).
\end{equation*}
Local minimizers of $\mathcal{J}$ are characterized by the directional derivative of $\mathcal{J}$ being zero in all directions.
The directional derivative of $\mathcal{J}$ at a point $w \in H^1$ in the direction of a function $v$ is
\begin{equation*}
 \frac{d\mathcal{J}(w)}{dv}
 =
 a(w,v) - (f,v)_{L^2(\Omega)}.
\end{equation*}
Hence the test function $v$ in~\eqref{eq:weak_reaction_diffusion} can be interpreted as a direction vector based at $u$.

If we now consider functionals defined on a manifold $\mathcal{N}$, then
small variations of a function $u \in \mathcal{N}$ are the elements of the tangent space $T_u \mathcal{N}$
of $\mathcal{N}$ at u.  There is not a single test function space anymore; rather, each configuration
$u \in \mathcal{N}$ has its own test function space.  Note, though, that for each $u \in \mathcal{N}$,
the corresponding test functions form a linear space.

This construction is no contradiction to the linear theory, which takes $u$ and $v$ from the same space $H^1(\Omega)$.
Indeed, if $u$ is element of the space $H^1(\Omega)$ (which, for the sake of the intuitive argument here, we interpret
as a manifold), then the test functions $v$ must be chosen from the tangent
space $T_u H^1(\Omega)$ of $H^1(\Omega)$ at $u$.  However, since $H^1(\Omega)$ is a linear space, all its tangent
spaces are isomorphic to the base space $H^1(\Omega)$.  Therefore, claiming that the test functions
must be from $H^1(\Omega)$ itself is merely an abuse of notation.

\subsection{Generalized Jacobi fields}

\begin{figure}
  \begin{minipage}{\textwidth}
   \begin{center}
    \includegraphics[width=0.39\textwidth, clip, trim= 150 25 150 70]{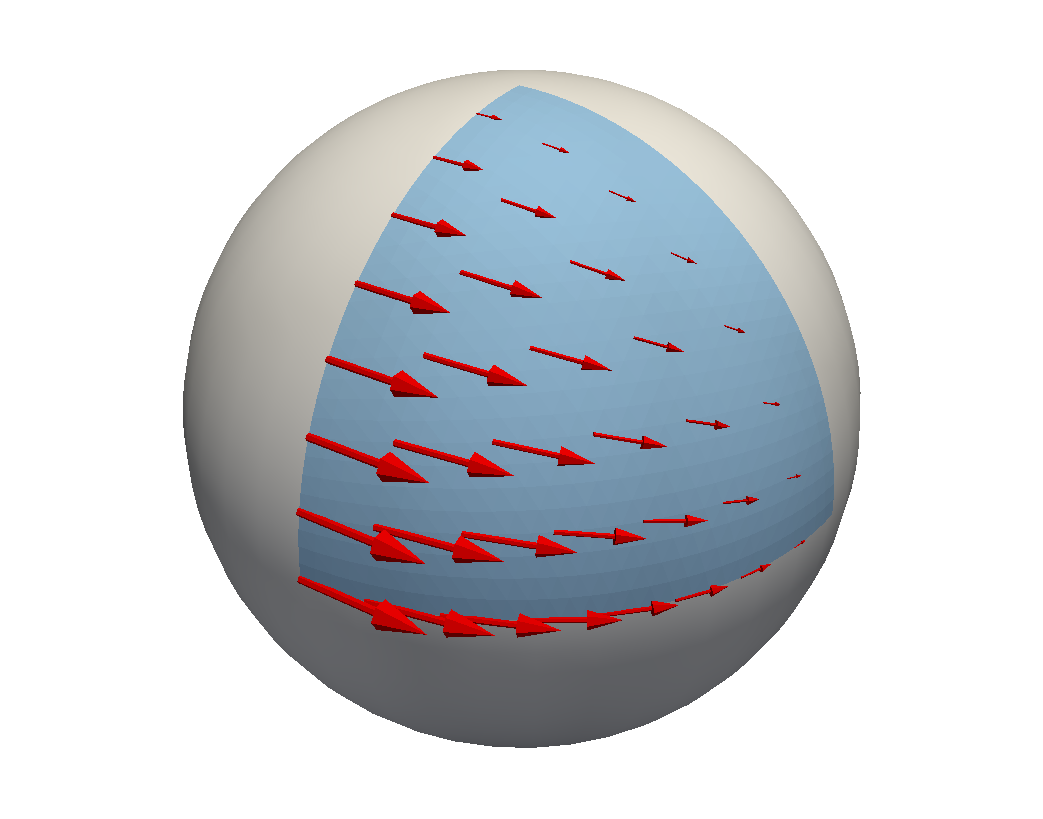}
    \hspace{0.03\textwidth}
    \includegraphics[width=0.39\textwidth, clip, trim= 150 25 150 70]{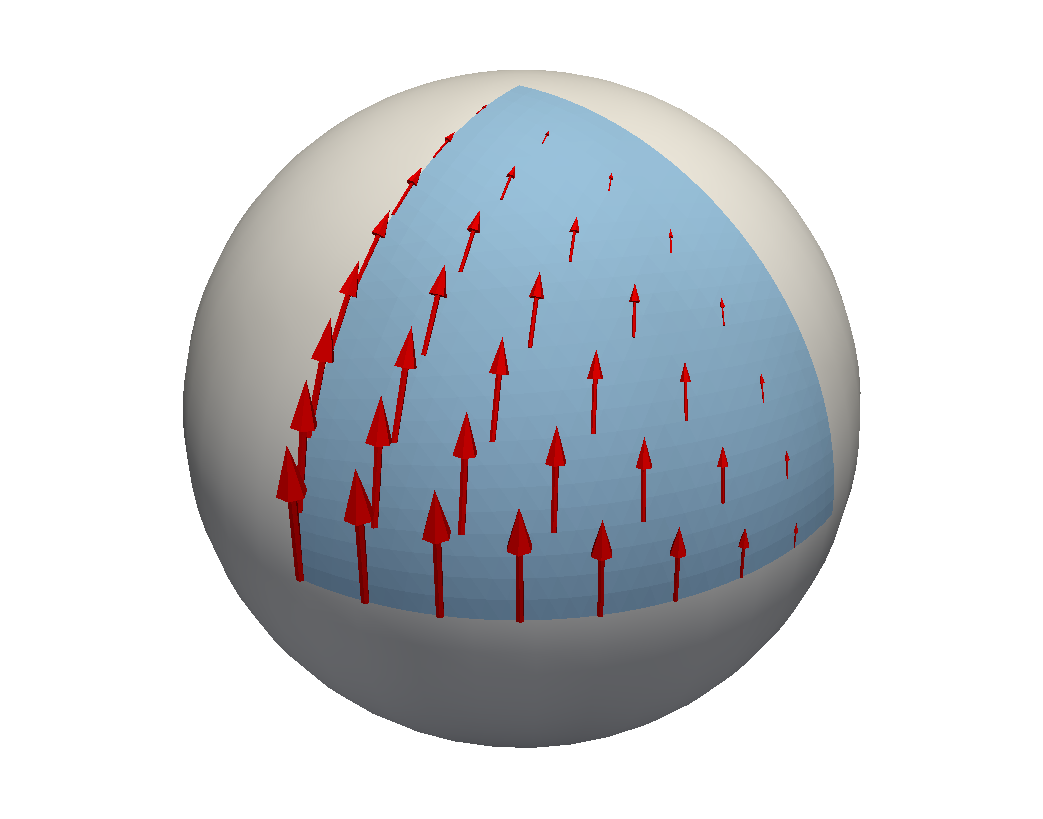}
   \subcaption{First-order}
   \end{center}
  \end{minipage}

  \vspace{0.02\textheight}

  \begin{minipage}{\textwidth}
   \begin{center}
    \includegraphics[width=0.39\textwidth, clip, trim= 150 25 150 70]{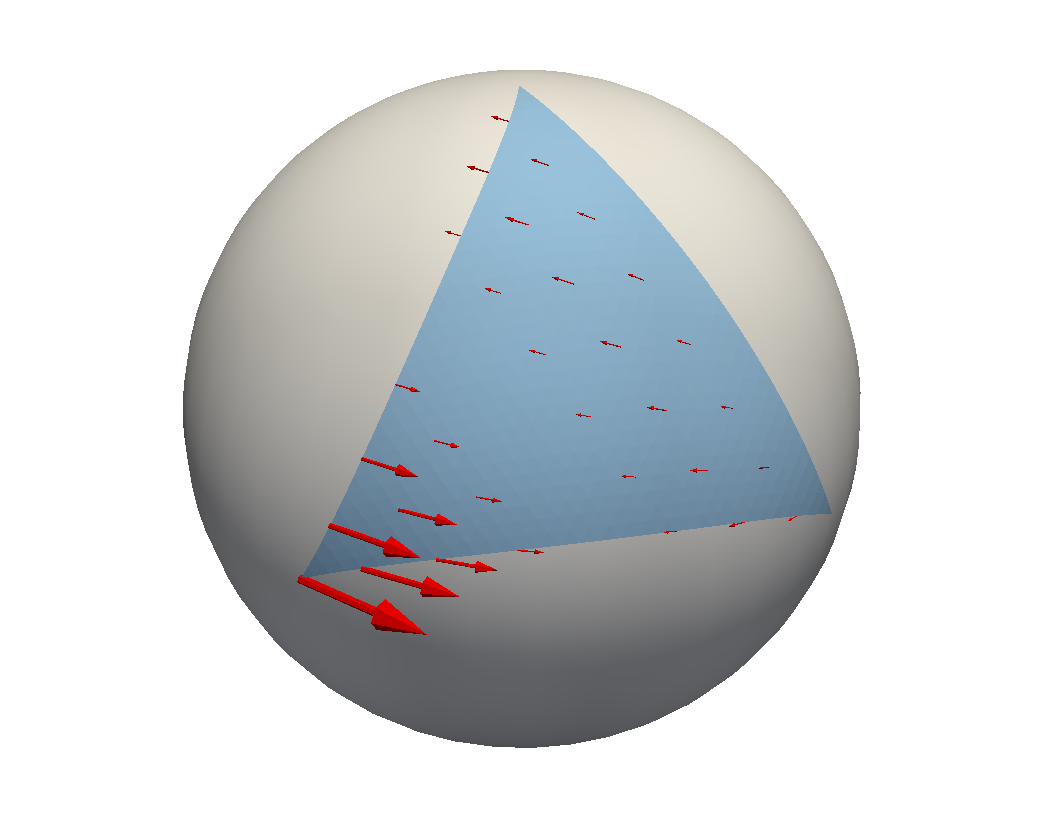}
    \hspace{0.03\textwidth}
    \includegraphics[width=0.39\textwidth, clip, trim= 150 25 150 70]{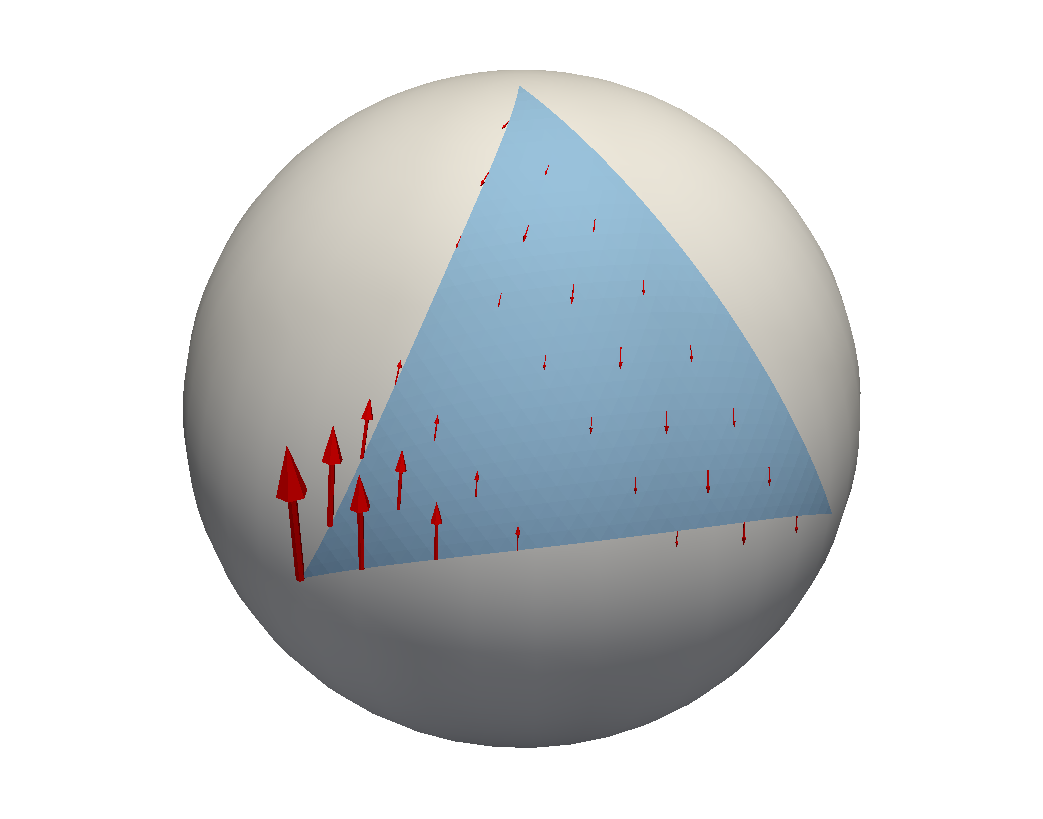}
   \subcaption{Second-order, vertex degree of freedom}
   \end{center}
  \end{minipage}

  \vspace{0.02\textheight}

  \begin{minipage}{\textwidth}
   \begin{center}
    \includegraphics[width=0.39\textwidth, clip, trim= 150 25 150 70]{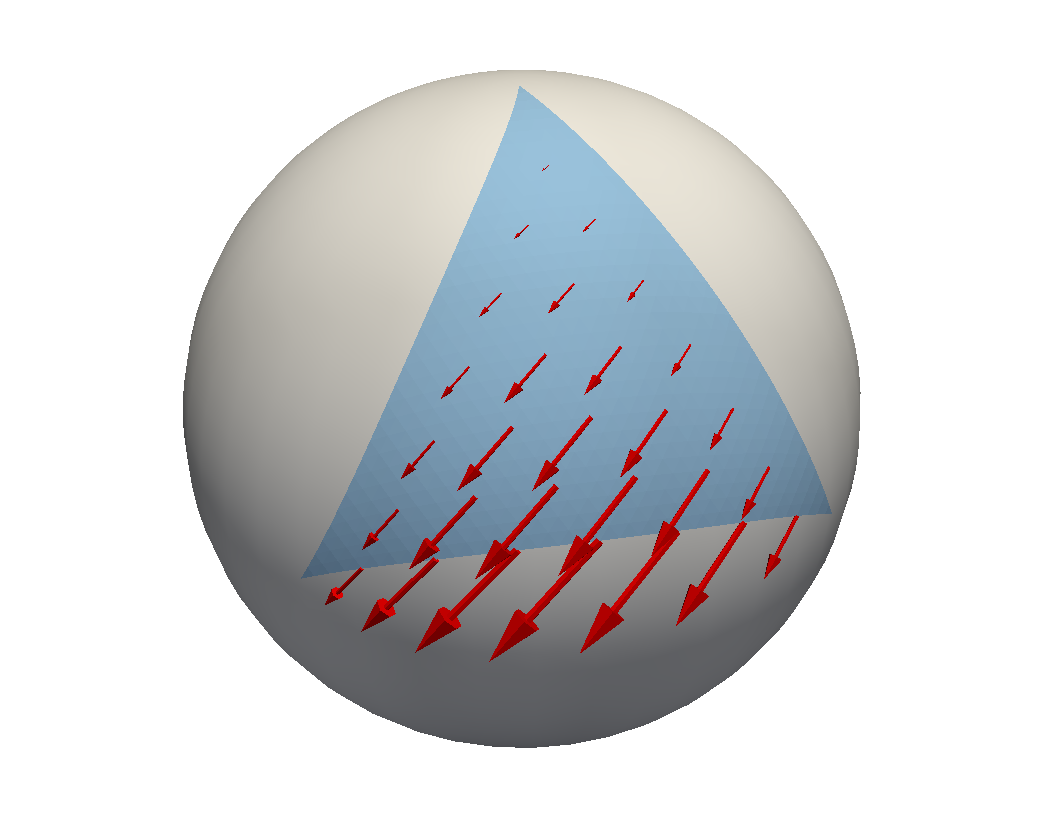}
    \hspace{0.03\textwidth}
    \includegraphics[width=0.39\textwidth, clip, trim= 150 25 150 70]{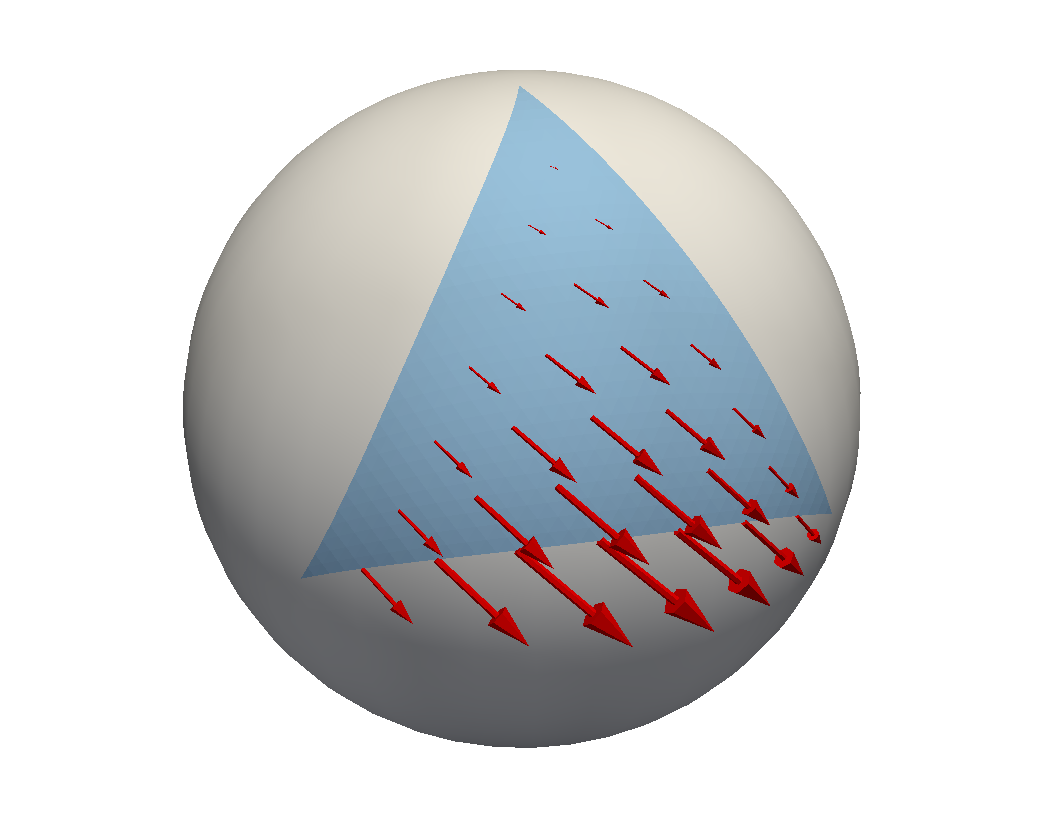}
   \subcaption{Second-order, edge degree of freedom}
   \end{center}
  \end{minipage}
 \caption{Test functions along geodesic interpolation functions from a triangle into the sphere $S^2$.
   These vector fields
   correspond to the shape functions normally used for Lagrangian finite element methods, because
   they are zero on all but one Lagrange point.
   Note how the second-order vertex vector fields in the second row partially point ``backwards'', because the corresponding
   scalar shape functions have negative values on parts of their domains.
 }
 \label{fig:variation_vector_fields}
\end{figure}

Motivated by the previous discussion, we construct test functions as tangent vectors to the GFE manifold
$V_h^M$.  We use the fact that a tangent vector $\eta$ at a given point $u_h \in V_h^M$ is the
derivative of a differentiable curve through $u_h$.

As for the construction of the GFE
spaces themselves, we construct test functions first for interpolation functions on a reference element,
and then piece them together to form global spaces.

\filbreak

\begin{definition}
\label{def:variations_of_geodesic_interpolation}
Let $\Upsilon \in P_p(M)$ be a geometric interpolation function for the values $v_1, \dots, v_m \in M$.
A vector field $\eta$ along $\Upsilon$ is called
a variation of $\Upsilon$, if there is a differentiable curve $c : (-\epsilon, \epsilon) \to P_p(M)$
such that
\begin{equation*}
 \dot{c}|_{t=0}(\xi) = \eta(\xi)
\end{equation*}
for all $\xi \in T_\text{ref}$.
\end{definition}

Figure~\ref{fig:variation_vector_fields} shows six such vector fields along functions
from $P_1^\text{ge}(S^2)$ and $P_2^\text{ge}(S^2)$, mapping the reference triangle to the unit sphere.

Definition~\ref{def:variations_of_geodesic_interpolation} generalizes the well-known
Jacobi fields, because if $T_\text{ref}$ is one-dimensional, then a first-order geodesic
interpolation $\Upsilon^\text{ge}$ is a geodesic curve between the Lagrange values
\cite[Lem.\,2.2]{sander:2012}, and the variations constructed in Definition~\ref{def:variations_of_geodesic_interpolation}
are Jacobi fields~\cite[Thm.\,5.2.1]{jost:2011}.  If, on the other hand, we set $M = \R$,
the standard Lagrangian shape functions are obtained.  Hence, Definition~\ref{def:variations_of_geodesic_interpolation}
is a direct generalization of the test functions normally used in the finite element method.

\bigskip

GFE test functions have an algebraic representation.  Unlike for GFE functions themselves, where the
relationship between discrete functions and algebraic representations is complicated, for test functions
the two are isomorphic again.
\begin{lemma}
\label{lem:vector_isomorphism}
 Let $\Upsilon$ be a geometric interpolation function for the values $v_1,\dots,v_m \in M$.  The generalized Jacobi fields
 on $\Upsilon$ form a vector space $\mathcal{V}_\Upsilon$, which is isomorphic to $\prod_{i=1}^m T_{v_i} M$.
 The isomorphism has an explicit representation as
 \begin{align*}
  \mathcal{T} & \; : \prod_{i=1}^m T_{v_i} M \to \mathcal{V}_\Upsilon \\
  \mathcal{T}[b_1,\dots,b_m](\xi)
       & = \sum_{i=1}^m \frac{\partial \Upsilon(v_1,\dots,v_m;\xi)}{\partial v_i} \cdot b_i
 \end{align*}
 for all $b_i \in T_{v_i}M$, $i=1,\dots,m$, and $\xi \in T_\text{ref}$.
\end{lemma}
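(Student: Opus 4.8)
The plan is to show three things: (i) $\mathcal{V}_\Upsilon$ is a vector space; (ii) the map $\mathcal{T}$ lands in $\mathcal{V}_\Upsilon$ and is linear; (iii) $\mathcal{T}$ is a bijection, hence an isomorphism. The key structural fact I would exploit throughout is that $\Upsilon$ depends $C^\infty$-smoothly on the nodal values $v_1,\dots,v_m$ (by \cite[Thm.\,4.1]{sander:2015} for geodesic interpolation, and obviously for projection-based interpolation whenever $\mathcal{P}$ is smooth), so that differentiating along curves of nodal data is legitimate.

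First I would fix a differentiable curve $c:(-\epsilon,\epsilon)\to P_p(M)$ with $c(0)=\Upsilon$; since each $c(t)$ is itself a geometric interpolation function, it is determined by its nodal values $v_i(t)\colonequals c(t)(a_i)\in M$, and $t\mapsto v_i(t)$ is a differentiable curve in $M$ through $v_i$. Setting $b_i\colonequals \dot v_i(0)\in T_{v_i}M$ and applying the chain rule to $c(t)(\xi)=\Upsilon(v_1(t),\dots,v_m(t);\xi)$ gives
\begin{equation*}
 \dot c|_{t=0}(\xi) = \sum_{i=1}^m \frac{\partial \Upsilon(v_1,\dots,v_m;\xi)}{\partial v_i}\cdot b_i = \mathcal{T}[b_1,\dots,b_m](\xi).
\end{equation*}
This simultaneously shows that every variation has the form $\mathcal{T}[b_1,\dots,b_m]$ (so $\mathcal{T}$ is surjective onto $\mathcal{V}_\Upsilon$) and that $\mathcal{V}_\Upsilon$ is closed under the linear operations $\mathcal{T}$ induces from $\prod_i T_{v_i}M$, hence is a vector space. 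Conversely, given arbitrary $b_i\in T_{v_i}M$, I would produce a witnessing curve by choosing any smooth curves $v_i(t)$ in $M$ with $v_i(0)=v_i$, $\dot v_i(0)=b_i$ (e.g.\ $v_i(t)=\exp_{v_i}(tb_i)$), and setting $c(t)\colonequals \Upsilon(v_1(t),\dots,v_m(t);\cdot)$; for $|t|$ small this lies in $P_p(M)$ by the well-posedness results (Theorems~\ref{thm:unique_riemannian_barycenter} and~\ref{thm:well_posedness} for $\Upsilon^\text{ge}$; smallness of the perturbation keeps $\sum\varphi_i(\xi)\imath(v_i(t))$ in the domain of $\mathcal{P}$ for $\Upsilon^\text{pr}$), and it is differentiable in $t$ by smooth dependence on nodal values. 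Thus $\mathcal{T}[b_1,\dots,b_m]\in\mathcal{V}_\Upsilon$ and $\mathcal{T}$ is well-defined and surjective. Linearity of $\mathcal{T}$ is immediate from the formula, the derivative $\partial\Upsilon/\partial v_i$ being a fixed linear map $T_{v_i}M\to T_{\Upsilon(\xi)}M$ for each $\xi$.

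It remains to prove injectivity of $\mathcal{T}$, which I expect to be the main obstacle. Suppose $\mathcal{T}[b_1,\dots,b_m]\equiv 0$, i.e.\ $\sum_i (\partial\Upsilon/\partial v_i)(\xi)\,b_i=0$ for all $\xi\in T_\text{ref}$. Evaluating at the Lagrange nodes $\xi=a_j$ and using the interpolation property $\Upsilon(v_1,\dots,v_m;a_j)=v_j$: differentiating this identity in $v_i$ yields $(\partial\Upsilon/\partial v_i)(a_j)=\delta_{ij}\,\mathrm{id}_{T_{v_j}M}$, so the equation at $\xi=a_j$ collapses to $b_j=0$. Hence $b_i=0$ for all $i$ and $\mathcal{T}$ is injective. (The identity $(\partial\Upsilon/\partial v_i)(a_j)=\delta_{ij}\,\mathrm{id}$ is the one nontrivial ingredient and I would justify it carefully by differentiating the nodal interpolation identity, using that this identity holds in a neighborhood of the given nodal values by well-posedness.) Combining surjectivity, injectivity, and linearity, $\mathcal{T}$ is a linear isomorphism $\prod_{i=1}^m T_{v_i}M\to\mathcal{V}_\Upsilon$, which completes the proof.
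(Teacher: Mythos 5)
Your proposal is correct and follows essentially the same route as the paper: surjectivity via the chain rule applied to the curve of nodal values, injectivity via evaluation at the Lagrange nodes using $\partial\Upsilon(a_j)/\partial v_i=\delta_{ij}\,\mathrm{id}$, and linearity from linearity of the derivatives $\partial\Upsilon/\partial v_i$. Your additional care in verifying that $\mathcal{T}[b_1,\dots,b_m]$ is genuinely a variation (by constructing witnessing curves $t\mapsto\Upsilon(\exp_{v_i}(tb_i);\cdot)$ and invoking well-posedness for small $t$) is a detail the paper leaves implicit, and is a welcome tightening rather than a departure.
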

\begin{proof}
Let $\eta$ be a generalized Jacobi field along $\Upsilon$.  By definition, there is a curve $c$
in $P_p(M)$ such that $\dot{c}(0) = \eta$.  Let $C : (-\epsilon, \epsilon) \to M^m$ be the
corresponding curve of point values at the Lagrange nodes.  Then, the value of $\eta$ at a point $\xi \in T_\text{ref}$
is
\begin{align*}
 \eta(\xi) & = \dot c(\xi)\big|_{t=0} \\
 & =
 \frac{d}{dt} \Upsilon(C(t), \xi)\Big|_{t=0} \\
 & =
 \sum_{i=1}^m \frac{d}{dv_i} \Upsilon(C(0),\xi) \cdot \frac{d C_i(t)}{dt}\bigg|_{t=0} \\
 & =
 \mathcal{T} [b_1, \dots, b_m](\xi),
\end{align*}
with $b_i = \frac{d C_i(t)}{dt}\big|_{t=0}$, $i = 1,\dots,m$.  Hence there is a set of tangent vectors
$b_1, \dots, b_m$ such that $\mathcal{T} [b_1, \dots, b_m] = \eta$, and therefore $\mathcal{T}$ is surjective.

On the other hand, the restriction of the vector field $\mathcal{T}[b_1,\dots,b_m]$ to the Lagrange nodes $a_1, \dots, a_m$ yields
$b_1,\dots, b_m$, since $\frac{\partial \Upsilon(a_j)}{\partial v_i}$ is the identity if $i=j$,
and zero otherwise.   Therefore $\mathcal{T}$ is injective.

Finally, the linearity of $\mathcal{T}$ follows from the linearity of the derivatives
$\frac{\partial \Upsilon(v_1,\dots,v_m;\xi)}{\partial v_i}$.
\end{proof}

The quantities $\frac{\partial \Upsilon}{\partial v_i}$ appearing in the expression for the isomorphism $\mathcal{T}$
between coefficient vectors and vector fields are not new.  They already appear in the
expressions for the derivatives of energy functionals $J : M^n \to \R$ with respect to coefficients; for gradient descent and
Newton-type methods.  Computation is straightforward if $\Upsilon$ is projection-based interpolation
with an explicitly given projection $\mathcal{P}$.  For geodesic finite elements, it was later argued
in~\cite{sander:2015,sander_neff_birsan:2016} that energy gradients and Hessians are best
evaluated with an automatic differentiation system (and the same argument holds for projection-based finite elements
as well).  But a simple way to evaluate $\frac{\partial \Upsilon}{\partial v_i}$ for geodesic interpolation
has been proposed in the literature nevertheless~\cite{sander:2012,sander:2013}, which we briefly revisit here.

Let $\Upsilon^\text{ge}(v,\xi) : M^m \times T_\text{ref} \to M$ be a function given by geodesic
interpolation, $v_i \in M$, $i=1,\dots,m$ the coefficients corresponding to the $m$ Lagrange nodes, and let
$\xi \in T_\text{ref}$ be arbitrary but fixed. We want to compute the derivatives
\begin{equation*}
 \parder{}{v_i} \Upsilon^\text{ge}(v_1,\dots,v_m; \xi)
 \; : \;
 T_{v_i} M \to T_{\Upsilon^\text{ge}(v,\xi)} M
\end{equation*}
for all $i=1,\dots,m$.  For this, we recall that values $q^*$ of $\Upsilon^\text{ge}$ are minimizers of the functional
\begin{equation*}
 f_{v,\xi} : M \to \R
 \qquad
 f_{v, \xi} (q) \colonequals \sum_{i=1}^m \varphi_i(\xi) \dist(v_i, q)^2.
\end{equation*}
Hence, they fulfill the first-order optimality condition
\begin{equation*}
 F(v_1,\dots,v_m; \xi, q^*)
 \colonequals
 \frac{\partial f_{v,\xi}(q)}{\partial q} \bigg|_{q = q^*}
 = 0.
\end{equation*}
Taking the derivative of this with respect to any of the $v_i$ gives, by the chain rule,
\begin{equation*}
 \frac{dF}{dv_i}
 =
 \parder{F}{v_i} + \parder{F}{q} \cdot \parder{\Upsilon^\text{ge}(v,\xi)}{v_i}
 = 0,
\end{equation*}
with
\begin{equation}
\label{eq:dFdv}
 \parder{F}{v_i} = \varphi_i(\xi) \parder{}{v_i}\parder{}{q} \dist(v_i,q)^2
\end{equation}
and
\begin{equation}
\label{eq:dFdq}
 \frac{\partial F}{\partial q}
 =
 \operatorname{Hess} f_{v,\xi}
 =
 \sum_{i=1}^m \varphi_i(\xi) \frac{\partial^2}{\partial q^2} \dist(v_i, q)^2.
\end{equation}
By \cite[Lemma~3.11]{sander:2015} the matrix $\partial F / \partial q$ is invertible.
Hence the derivative $\frac{\partial \Upsilon^\text{ge}}{\partial v_i}$ of $\Upsilon^\text{ge}(v_1,\dots,v_m;\xi)$ with respect to one
of its coefficients $v_i$ can be computed as a minimization problem to obtain
the value $\Upsilon^\text{ge}(v,\xi)$
and the solution of the linear system of equations
\begin{equation*}
 \parder{F}{q} \cdot \parder{}{v_i} \Upsilon^\text{ge} (v,\xi)
  =
 - \parder{F}{v_i}.
\end{equation*}
The expressions $\parder{}{v_i}\parder{}{q} \dist(v_i,q)^2$ and $\frac{\partial^2}{\partial q^2} \dist(v_i,q)^2$
that appear in \eqref{eq:dFdv} and \eqref{eq:dFdq}, respectively,
encode the geometry of $M$.  Closed-form expressions for both are given in~\cite{sander:2012} for the case
of $M$ being the unit sphere.
For $M = \text{SO(3)}$, the second derivative of $\dist(v,\cdot)^2$ with respect to the second argument
has been computed in~\cite{sander_neff_birsan:2016}.

\subsection{Generalized Jacobi fields as interpolation in the tangent bundle}

The isomorphism $\mathcal{T}$ defined in Lemma~\ref{lem:vector_isomorphism} constructs a vector field along
a given function $\Upsilon$ from a set of tangent vectors $b_1, \dots, b_m$ at the Lagrange nodes.
It can therefore also be interpreted as an interpolation operator for
vector fields.  Using that the tangent bundle $TM$ can be given the structure of a smooth
manifold itself, \citeauthor{hardering:2015} showed the elegant result
that for variations of geodesic interpolation functions, $\mathcal{T}$ can even itself be interpreted
as geodesic interpolation in the sense of Section~\ref{sec:geodesic_interpolation}, if the metric on $TM$ is
chosen appropriately.

The following is taken from \cite{hardering:2015}, Sections~1.2.2 and Remark~2.26.
Let $\pi : TM \to M$ denote the canonical projection.  The tangent space of $TM$ at any point $(q,V)$
splits into the horizontal and the vertical subspace
\begin{equation*}
 T_{(q,V)}TM
 =
 H_{(q,V)} \oplus V_{(q,V)},
\end{equation*}
where the vertical subspace is defined as the kernel of $d\pi_{(q,V)}$.  Roughly speaking, $H_{(q,V)}$ contains
the variations of $q$, and $V_{(q,V)}$ contains the variations of $V$.

For any vector $W \in T_qM$ there exists a unique vector $W^h(q,V) \in H_{(q,V)}$ such that $d\pi(W^h) = W$.
This vector $W^h$ is called the horizontal lift of $W$~\cite{gallot_hulin_lafontaine:2004}, and this lifting defines
an isomorphism between $T_q M$ and $H_{(q,V)}$.  Likewise, there is a vertical lift $W^v$ of $W$ to $V_{(q,V)}$.
For any smooth real-valued function $f$ on $M$, let $df \in T^*_qM$ be its differential at $q$, and $Wf$
the derivative of $f$ in the direction of $W$.  There is a unique vector $W^v(q,V) \in V_{(q,V)}$
such that $W^v(df) = Wf$ for all functions $f$ on $M$. This vertical lift
defines an isomorphism between the vector space $T_qM$ and $V_{(q,V)}$.

Using these concepts, we can define a pseudo-metric $g^h$ on $TM$ from the metric $g$ of $M$.
Let $(q,V)$ be a point on $TM$.  Let $X_1$, $X_2$, $Y_1$, $Y_2$ be elements of $T_qM$, and
$\widetilde{X} = X^h_1 + X^v_2$ and $\widetilde Y = Y^h_1 + Y^v_2$ be elements
of $T_{(q,V)}TM$.  The horizontal lift $g^h$ of $g$ on $TM$ evaluated for $\widetilde{X}$ and $\widetilde{Y}$ is
\begin{equation*}
 g^h_{(q,V)}(\widetilde{X}, \widetilde{Y}) = g_q(X_1^h, Y_2^v) + g_q(X_2^v,Y_1^h).
\end{equation*}
It is a pseudo-Riemannian metric on $TM$ of signature $(k,k)$, with $k$ the dimension of $M$~\cite{kowalski_sekizawa:1988}.

Using this apparatus we can show that geodesic vector field interpolation, originally defined by variation
of geodesic interpolants, can also be
seen as a variational form of geodesic interpolation on $TM$ with respect to the horizontal lift metric.
We do not obtain a minimization formulation of geodesic vector field interpolation, as $g^h$ is only
a pseudo-metric.
\begin{lemma}[\cite{hardering:2015}]
If $(v_i, V^i)$ denotes values in $TM$, $\Upsilon^\text{ge}(v,\cdot)$ the geodesic interpolation of the $v_i$ in $M$,
and $\mathcal{T} = \mathcal{T}[V_1, \dots, V_m]$ the interpolation of the $V_i$ in the sense of
Lemma~\ref{lem:vector_isomorphism}, then we have
\begin{equation*}
 \sum_{i=1}^m \varphi_i(\xi) \log^h_{(\Upsilon^\text{ge}(v,\xi), \mathcal{T}(\xi))} (v_i, V_i)
 =
 (0,0)
 \in T_{(\Upsilon^\text{ge}(\xi),\mathcal{T}(\xi))}TM,
\end{equation*}
where $\log^h_{(q,V)}$ is the inverse of the exponential map of the metric $g^h$ at the point $(q,V)$.
\end{lemma}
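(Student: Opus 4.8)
The plan is to fix $\xi \in T_\text{ref}$, abbreviate $q \colonequals \Upsilon^\text{ge}(v,\xi)$ and $U \colonequals \mathcal{T}(\xi) \in T_q M$, and split the asserted $TM$-valued sum at $\xi$ into its horizontal and vertical parts, showing that each of the two vanishes separately. The horizontal part will turn out to be zero directly from the first-order optimality~\eqref{eq:first_order_optimality} of $\Upsilon^\text{ge}$, and the vertical part from differentiating~\eqref{eq:first_order_optimality} with respect to the nodal values. I would use two facts about the horizontal lift metric, both standard and available from the references cited above: (i)~a curve $t \mapsto (c(t), W(t)) \in TM$ is a $g^h$-geodesic if and only if $c$ is a geodesic of $M$ and $W$ is a Jacobi field along $c$; and (ii)~the velocity of a lifted curve splits as $\tfrac{d}{dt}(c(t),W(t)) = \dot c(t)^{h} + \bigl(\tfrac{D}{dt}W(t)\bigr)^{v}$, with the lifts taken at $(c(t),W(t))$. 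Throughout, the ``close together'' hypotheses already used in Theorems~\ref{thm:unique_riemannian_barycenter} and~\ref{thm:well_posedness} guarantee that the minimizing geodesic $c_i$ from $q$ to $v_i$, all the logarithms, and the Jacobi fields below exist and are unique.

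From (i) and (ii) it follows that the $g^h$-geodesic joining $(q,U)$ to $(v_i,V_i)$ in unit time is $t \mapsto (c_i(t), J_i(t))$, where $J_i$ is the unique Jacobi field along $c_i$ with $J_i(0) = U$ and $J_i(1) = V_i$; hence
\begin{equation*}
 \log^h_{(q,U)}(v_i, V_i) = \bigl(\log_q v_i\bigr)^{h} + W_i^{v},
 \qquad
 W_i \colonequals \frac{D}{dt}\Big|_{t=0} J_i(t) .
\end{equation*}
Next I would identify $W_i$ by a geodesic variation generated by the nodal data. Set $v_i(s) \colonequals \exp_{v_i}(s V_i)$ and $q(s) \colonequals \Upsilon^\text{ge}\bigl(v_1(s), \dots, v_m(s); \xi\bigr)$; by the differentiability of geodesic interpolation in its corner values quoted above together with the definition of $\mathcal{T}$, this is a differentiable curve in $P_p^\text{ge}(M)$ with $q(0) = q$ and $\dot q(0) = \sum_{i=1}^m \tfrac{\partial \Upsilon^\text{ge}(v;\xi)}{\partial v_i}\, V_i = \mathcal{T}(\xi) = U$. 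Writing $\gamma_i(t,s) \colonequals \exp_{q(s)}\!\bigl(t\,\log_{q(s)} v_i(s)\bigr)$, the geodesic in $t$ from $q(s)$ to $v_i(s)$, one has $\partial_t \gamma_i(0,s) = \log_{q(s)} v_i(s)$, while $\partial_s \gamma_i(\cdot, 0)$ is exactly the Jacobi field $J_i$. Symmetry of the Levi-Civita connection then yields
\begin{equation*}
 \frac{D}{ds}\Big|_{s=0} \log_{q(s)} v_i(s)
 = \frac{D}{ds}\Big|_{s=0} \partial_t \gamma_i(0,s)
 = \frac{D}{dt}\Big|_{t=0} \partial_s \gamma_i(t,0)
 = W_i .
\end{equation*}

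Assembling the sum then finishes the proof. Since the horizontal and vertical lifts at the fixed base point $(q,U)$ are linear,
\begin{equation*}
 \sum_{i=1}^m \varphi_i(\xi)\, \log^h_{(q,U)}(v_i, V_i)
 = \Bigl(\sum_{i=1}^m \varphi_i(\xi) \log_q v_i\Bigr)^{h}
 + \Bigl(\sum_{i=1}^m \varphi_i(\xi)\, W_i\Bigr)^{v} .
\end{equation*}
The first term is zero because $q = \Upsilon^\text{ge}(v,\xi)$ satisfies~\eqref{eq:first_order_optimality}. For the second term, applying~\eqref{eq:first_order_optimality} at every $s$ (with $q(s) = \Upsilon^\text{ge}(v(s),\xi)$ and nodal values $v_i(s)$) shows that $s \mapsto \sum_{i=1}^m \varphi_i(\xi) \log_{q(s)} v_i(s)$ is identically zero, so its covariant $s$-derivative at $s=0$ vanishes; by the previous display that derivative is $\sum_{i=1}^m \varphi_i(\xi) W_i$. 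Both terms being zero, the sum equals $(0,0)$, which is the claim.

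The step I expect to be the main obstacle is not any one computation but getting the conventions to fit together: one has to fix the normalizations of the vertical lift and of $\exp^h$ (equivalently $\log^h$) so that fact~(i) reads verbatim, and one has to check that the ``close together'' assumptions are strong enough to exclude conjugate points along each $c_i$ --- so that the Jacobi boundary value problem $J_i(0) = U$, $J_i(1) = V_i$ has a unique solution and $\log^h_{(q,U)}(v_i,V_i)$ is well defined. Both are handled in the references cited before the statement.
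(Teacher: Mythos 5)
The paper does not prove this lemma at all --- it is quoted from \cite{hardering:2015} (the text points to Sections~1.2.2 and Remark~2.26 there), so there is no in-paper argument to compare against; judged on its own, your proof is correct and self-contained. The two facts you import are indeed standard: for the Levi-Civita connection the horizontal lift metric $g^h$ coincides with the complete lift metric, whose geodesics in $TM$ are exactly the curves $t \mapsto (c(t), J(t))$ with $c$ a geodesic of $M$ and $J$ a Jacobi field along $c$ (Yano--Kobayashi), and the velocity of a lifted curve splits via the connector as $\dot c^{\,h} + \bigl(\tfrac{D}{dt}W\bigr)^{v}$. Given these, your decomposition $\log^h_{(q,U)}(v_i,V_i) = (\log_q v_i)^h + W_i^v$ is right, the horizontal part of the weighted sum dies by \eqref{eq:first_order_optimality}, and the vertical part dies because \eqref{eq:first_order_optimality} holds identically in $s$ along the variation $v_i(s) = \exp_{v_i}(sV_i)$, $q(s) = \Upsilon^\text{ge}(v(s),\xi)$, so its covariant $s$-derivative vanishes; the symmetry lemma correctly identifies that derivative termwise with $\sum_i \varphi_i(\xi) W_i$. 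Two small points you handle implicitly but correctly: the identification $\dot q(0) = \mathcal{T}(\xi)$ rests on the differentiability of $\Upsilon^\text{ge}$ in its nodal values (quoted in the paper from \cite{sander:2015}), and the well-definedness of $\log^h$ for the pseudo-metric requires uniqueness of the Jacobi boundary value problem along each $c_i$, i.e.\ absence of conjugate points, which the Karcher-type smallness assumptions ($\rho < \tfrac{1}{4}\pi K^{-1/2}$) guarantee --- you flag both appropriately, so there is no gap.
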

This corresponds to the first-order optimality condition~\eqref{eq:first_order_optimality} of geodesic interpolation
in the tangent bundle $TM$.  If only the projection onto the first factor $q$ is considered the formula
degenerates to geodesic interpolation on $M$.

\subsection{Global test function spaces}

We now give discretizations of the global test function spaces.
They are constructed by piecing together local variations continuously across element boundaries.

\begin{definition}
 Let $u_h \in V_h^M$ be a geometric finite element function.  A test function $v_h$ of $u_h$ is a continuous vector field
 along $u_h$ such that $v_h|_T$ is a (generalized) Jacobi field on $u_h|_T$ for all elements $T$ of $\mathcal{G}$.
\end{definition}
We note that this definition is equivalent to saying that a test function $v_h$ of $u_h$ is the derivative
of a curve in $V_h^M$ at $u_h$.  Therefore, test functions are vectors in the tangent space $T_{u_h} V_h^M$.
Indeed, we have:
\begin{lemma}
 Let $u_h \in V_h^M(G)$ be a geometric finite element function, and let
$\bar{u} \in M^n$ be its values at the grid vertices.  Then $T_{u_h} V_h^M$
is isomorphic to $\prod_{i=1}^n T_{u_i}M$.
\end{lemma}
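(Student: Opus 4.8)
The plan is to combine the local isomorphism of Lemma~\ref{lem:vector_isomorphism} with the continuity constraint across element boundaries, and then observe that the nodal degrees of freedom living on shared faces match up automatically. First I would fix notation: let $\mathcal{G}$ have elements $T_1,\dots,T_K$, each with its reference element and $m_k$ local Lagrange nodes, and recall that the global set of Lagrange nodes $\{x_1,\dots,x_n\}$ is the union of the local ones, with nodes on shared faces identified. A test function $v_h$ of $u_h$ is, by the preceding definition, a continuous vector field along $u_h$ whose restriction $v_h|_{T_k}$ is a generalized Jacobi field on $u_h|_{T_k}$ for every $k$. By Lemma~\ref{lem:vector_isomorphism}, each such restriction is uniquely determined by its values at the local Lagrange nodes of $T_k$, i.e.\ by a tuple in $\prod_{i} T_{u_h(x_i)}M$ where $i$ ranges over the nodes of $T_k$; conversely any such tuple gives a local Jacobi field.

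Next I would define the candidate map: send $v_h \in T_{u_h}V_h^M$ to its nodal values $(v_h(x_1),\dots,v_h(x_n)) \in \prod_{i=1}^n T_{u_i}M$, where $u_i = u_h(x_i)$. This is well-defined and linear because each $v_h$ is a continuous vector field along $u_h$, so its value at a vertex $x_i$ is unambiguous and lies in $T_{u_i}M$ regardless of which element containing $x_i$ one uses to evaluate it. Injectivity is immediate: if all nodal values vanish, then on each element $T_k$ the local Jacobi field $v_h|_{T_k}$ has vanishing nodal data, hence is zero by the injectivity part of Lemma~\ref{lem:vector_isomorphism}, so $v_h \equiv 0$. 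For surjectivity, given an arbitrary tuple $(b_1,\dots,b_n) \in \prod_{i=1}^n T_{u_i}M$, define on each element $T_k$ the local Jacobi field $\mathcal{T}_k[b_{i_1},\dots,b_{i_{m_k}}]$ using the restriction of the tuple to the nodes of $T_k$. These local fields glue to a continuous global vector field along $u_h$: on a shared face $F$ of two elements $T_k, T_\ell$, both $u_h|_{T_k}$ and $u_h|_{T_\ell}$ restrict to the same geometric interpolation on $F$ (since $u_h$ is continuous and the trace of a $p$-th order geometric interpolation on a face is the $p$-th order geometric interpolation of the face's Lagrange values), and hence — this is the point needing care — the local Jacobi fields also agree on $F$, because the trace of $\mathcal{T}_k$ on $F$ depends only on the $\frac{\partial \Upsilon}{\partial v_i}$ for $i$ on $F$ and on the $b_i$ for $i$ on $F$, which are common data.

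The main obstacle is exactly this last compatibility claim: that the trace on a face of the local generalized Jacobi field $\mathcal{T}_k[b_\bullet]$ coincides with the generalized Jacobi field on the face built from the face nodal data, so that the glued field is continuous and lies in $T_{u_h}V_h^M$. For geodesic interpolation this follows from the fact that the face trace of $\Upsilon^\text{ge}$ is again a lower-dimensional $\Upsilon^\text{ge}$ with the same curvature and well-posedness hypotheses \cite[Lem.\,2.2]{sander:2012}, together with differentiating this identity in the coefficients; for projection-based interpolation it follows directly from formula~\eqref{eq:projection_based_interpolation}, since restricting $\xi$ to a face of $T_\text{ref}$ kills the shape functions of the interior/opposite nodes and leaves precisely the embedded interpolation of the face values, and differentiation in the remaining $v_i$ commutes with $\mathcal{P}$. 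Once this trace-compatibility is in hand, the glued vector field is a legitimate element of $T_{u_h}V_h^M$ with the prescribed nodal values, surjectivity follows, and the linear bijection constructed is the desired isomorphism; I would close by remarking that, as in Lemma~\ref{lem:vector_isomorphism}, the inverse is given explicitly, element by element, by the operators $\mathcal{T}_k$.
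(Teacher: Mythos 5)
Your proof is correct, but it takes a more constructive route than the paper, which in fact offers no proof of this lemma at all: the statement is presented as an immediate consequence of the earlier observation that, near $u_h$, the space $V_h^M$ inherits the differentiable manifold structure of $\mathcal{M} \subset M^n$ through the nodal evaluation operator $\mathcal{E}$ (because $\Upsilon$ depends differentiably on its coefficients), so that $T_{u_h}V_h^M \cong T_{\bar{u}}\mathcal{M} = \prod_{i=1}^n T_{u_i}M$ via $d\mathcal{E}$. Your argument instead builds the isomorphism by hand: nodal evaluation for one direction, and element-by-element application of the local operators $\mathcal{T}_k$ of Lemma~\ref{lem:vector_isomorphism} followed by gluing for the other. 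The value of your version is that it makes explicit, and actually verifies, the one point the paper's implicit argument sweeps under the rug, namely that the locally constructed Jacobi fields agree on shared faces. Your justification of this is sound: the face trace of $\Upsilon^{\text{ge}}$ is the lower-dimensional geodesic interpolation of the face values (the same fact underlying the conformity result), and differentiating that identity in the coefficients shows that $\partial\Upsilon/\partial v_j$ vanishes on a face for nodes $j$ not on that face, while for face nodes it reduces to the face interpolant's derivative; for projection-based interpolation the analogous statement follows directly from~\eqref{eq:projection_based_interpolation} and the Lagrange property of the $\varphi_i$. The paper's tacit route is shorter and buys the result directly from the chart $\mathcal{E}$; yours is more elementary, exhibits the inverse explicitly as the global $\mathcal{T}$, and documents why the glued field is continuous. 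One minor point worth making explicit in your write-up: the whole argument presupposes that $\bar{u}$ lies in $\mathcal{M}$, i.e.\ that the interpolation problem is well posed and differentiable on every element, since otherwise neither Lemma~\ref{lem:vector_isomorphism} nor the tangent space $T_{u_h}V_h^M$ is available.
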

The operator $\mathcal{T}$ defined in Lemma~\ref{lem:vector_isomorphism} extends a set of vectors at the
nodal values $v_1,\dots, v_m$ to a vector field along the interpolation function $\Upsilon$.  Given a GFE function $u_h$,
the local operator $\mathcal{T}$ can be generalized naturally to an operator that maps a set of tangent
vectors at the nodal values of $u_h$ to a test function along $u_h$.  In an abuse of notation, we will
denote both operators by the same letter $\mathcal{T}$.

Evaluation of global GFE test functions is straightforward.  Let $\eta_h$ be such a function for $u_h$,
and let $x \in \Omega$.  Then, to compute $\eta_h(x)$, suppose that $T$ is a grid element with $x \in T$.
Then, if $\xi$ are the local coordinates of $x$ in $T$,
\begin{equation*}
 \eta_h(x) = \mathcal{T}[b_1,\dots,b_m](\xi),
\end{equation*}
where $b_1,\dots,b_m$ are the values of $\eta_h$ at the Lagrange nodes of $T$.  No additional transformation
is necessary.

Finally, let $u_h$ be a fixed GFE function.  As the space $T_{u_h} V_h^M$ of test functions along $u_h$ is a linear space,
it admits a basis representation.  In particular, we can even construct a generalization of the nodal basis
for $T_{u_h} V_h^M$. Let $T_{u_1} M, \dots, T_{u_n} M$ be the set of tangent spaces
at the nodal values of $u_h$.
For each of the spaces $T_{u_1} M, \dots, T_{u_n} M$ select an orthonormal basis, and call the basis vectors
$\phi_{ij}$, $i=1,\dots,n$, $j=1,\dots,\dim M$.  To each vector $\phi_{ij}  \in T_{u_i}M$ corresponds a test function
in $T_{u_h} V_h^M$, defined as the unique function $\Phi_{ij}$ in $T_{u_h} V_h^M$ that is equal to $\phi_{ij}$
at Lagrange node $i$, and equal to the zero vector on all other Lagrange nodes.
The set of all these functions $\Phi_{ij}$, $i=1,\dots,n$, $j=1,\dots, \dim M$, forms a basis of
$T_{u_h} V_h^M$, which we call the  nodal basis.  All six test functions shown in Figure~\ref{fig:variation_vector_fields}
are such nodal basis functions.

\section{Equivalence in the case of minimization problems}
\label{sec:minimization}

In the original presentation of geodesic finite element functions \cite{sander:2012}, only PDEs with a minimization
formulation were considered.  The definition of discrete test function spaces was avoided by moving directly to
an algebraic minimization problem.  First and second variations were hence only ever
taken in the algebraic setting, where it was clear that $\prod_{i=1}^n T_{v_i}M$
is the proper space of variations around an algebraic configuration $\bar{v} \in \mathcal{M} \subset M^n$,
$\bar{v} = (v_1, \dots, v_n)$.

Now that discrete test function spaces are available, it is possible to also consider optimality conditions for minimization problems
in the spaces of GFE functions.  In this section we show that, with the definition of test functions given
in this paper, these two approaches are equivalent.  In other words, we show that the following
diagram commutes:
\begin{center}
\begin{tikzpicture}[node distance=2cm, auto]
  \node (P) {$V_h^M$};
  \node (B) [right of=P] {$T V_h^M$};
  \node (A) [below of=P] {$M^n$};
  \node (C) [below of=B] {$TM^n$};
  \draw[->] (P) to node {$d$} (B);
  \draw[->] (P) to node [swap] {$\mathcal{E}$} (A);
  \draw[->] (A) to node [swap] {$d$} (C);
  \draw[->] (B) to node {$\mathcal{T}^{-1}$} (C);
\end{tikzpicture}
\end{center}
The proof is short and the result may be obvious for people skilled in geometric analysis.  We show it
nevertheless for readers with other backgrounds.

\bigskip

Let $\mathcal{J} : H^1(\Omega,M) \to \R$ be a sufficiently smooth energy functional.  We want to find approximate minimizers
of this functional using the geometric finite element method.  For this, we first restrict
$\mathcal{J}$ to the discrete geometric finite element function space $V_h^M$.
This is possible without any approximation error as the GFE spaces are subsets of $H^1$ (Theorem~\ref{thm:conformity}).

A this point, there are two ways to proceed.  The approach chosen in~\cite{sander:2012} identifies the
discrete space $V_h^M$ with the algebraic space $ \mathcal{M} \subset M^n$ (locally).  This leads to the
algebraic minimization problem in $\mathcal{M}$ (locally) for the functional
\begin{equation*}
 J : \mathcal{M} \to \mathbb{R},
 \qquad
 J(\bar{v}) \colonequals \mathcal{J}(\mathcal{E}^{-1}(\bar{v})).
\end{equation*}

A necessary condition for $\bar{v} \in \mathcal{M}$ to be a minimizer of $J$ is that the
first variation of $J$ at $\bar{v}$ vanishes
\begin{equation}
\label{eq:algebraic_optimality_condition}
 dJ[\bar{v}](\bar{\eta}) = 0
\qquad
\forall \bar{\eta} \in T_{\bar{v}}M^n.
\end{equation}
This is the path through the lower left corner in the diagram above.

The second formulation uses discrete test functions to state the first-order optimality conditions directly
in the discrete function space.
A necessary condition for $v_h$ to be a local minimizer of $\mathcal{J}$ in $V_h^M$
is that the first variation vanishes.
Suppose that the test functions we have defined in Section~\ref{sec:test_function_spaces} are
defined in the correct way.  Then, a necessary condition for $v_h$ to be a local minimizer of $\mathcal{J}$ in $V_h^M$
is that the derivative of $\mathcal{J}$ is zero in the direction of all test functions $\eta_h$
\begin{equation*}
 d\mathcal{J}[v_h](\eta_h) = 0
\qquad
\forall \eta_h \in T_{v_h} V_h^M.
\end{equation*}
Using the evaluation operators $\mathcal{E}$ and $\mathcal{T}^{-1}$ for GFE functions and test functions, respectively,
we can obtain an algebraic form of this optimality condition
\begin{equation}
 \label{eq:algebraic_weak_formulation}
 d \mathcal{J}(\mathcal{E}^{-1}(\bar{v})](\mathcal{T}(\bar{\eta})) = 0,
 \qquad
\forall \bar{\eta} \in T_{\bar{v}}M^n.
\end{equation}
This is the path through the upper right corner in the diagram above.

The following result states that both paths are equivalent.  This is the justification for our definition
of test functions.

\begin{theorem}
 Problems~\eqref{eq:algebraic_optimality_condition} and \eqref{eq:algebraic_weak_formulation}
 are equivalent.
\end{theorem}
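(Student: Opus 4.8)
The plan is to unwind both optimality conditions to the same algebraic expression by using the chain rule together with the two defining identities $\mathcal{E}^{-1} \circ \mathcal{E} = \mathrm{id}$ on $V_h^M$ and $\mathcal{T} = d(\mathcal{E}^{-1})$ as maps between tangent bundles. Concretely, the key observation is that the test function operator $\mathcal{T}$ is, by construction, nothing but the differential of the inverse evaluation map: a generalized Jacobi field $\eta_h = \mathcal{T}[\bar\eta]$ along $u_h = \mathcal{E}^{-1}(\bar v)$ is the derivative at $t=0$ of the curve $t \mapsto \mathcal{E}^{-1}(C(t))$ where $C$ is any curve in $\mathcal{M}$ with $C(0) = \bar v$, $\dot C(0) = \bar\eta$. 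This is exactly what the computation in the proof of Lemma~\ref{lem:vector_isomorphism} establishes locally on each element, and the global operator $\mathcal{T}$ inherits the property elementwise. Hence the right-hand vertical arrow $\mathcal{T}^{-1}$ in the diagram is literally $d(\mathcal{E})$ read off at the level of tangent vectors, which is the content of the diagram commuting.

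First I would fix $\bar v \in \mathcal{M}$ and set $u_h \colonequals \mathcal{E}^{-1}(\bar v) \in V_h^M$; since $\bar v$ lies in the interior of $\widetilde{\mathcal M}$, the map $\mathcal{E}^{-1}$ is defined and differentiable in a neighborhood, so $J = \mathcal{J} \circ \mathcal{E}^{-1}$ is differentiable near $\bar v$. Next, for an arbitrary $\bar\eta \in T_{\bar v} M^n$, pick a curve $C : (-\epsilon,\epsilon) \to \mathcal{M}$ with $C(0) = \bar v$ and $\dot C(0) = \bar\eta$, and apply the chain rule to $J(C(t)) = \mathcal{J}(\mathcal{E}^{-1}(C(t)))$ at $t = 0$:
\begin{equation*}
 dJ[\bar v](\bar\eta)
 =
 d\mathcal{J}[\mathcal{E}^{-1}(\bar v)]\big( \tfrac{d}{dt}\big|_{t=0} \mathcal{E}^{-1}(C(t)) \big)
 =
 d\mathcal{J}[u_h]\big( \mathcal{T}(\bar\eta) \big),
\end{equation*}
where the last equality is precisely the identification of $\tfrac{d}{dt}\big|_{t=0}\mathcal{E}^{-1}(C(t))$ with the generalized Jacobi field $\mathcal{T}[\bar\eta]$ along $u_h$, as in Lemma~\ref{lem:vector_isomorphism}. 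This single identity shows $dJ[\bar v](\bar\eta) = d\mathcal{J}(\mathcal{E}^{-1}(\bar v)]\big(\mathcal{T}(\bar\eta)\big)$ for every $\bar\eta$, so the left-hand side vanishes for all $\bar\eta \in T_{\bar v}M^n$ if and only if the right-hand side does — i.e.\ \eqref{eq:algebraic_optimality_condition} holds iff \eqref{eq:algebraic_weak_formulation} holds. Since $\mathcal{T}$ is a linear isomorphism onto $T_{u_h}V_h^M$ (Lemma~\ref{lem:vector_isomorphism} and its global analogue), quantifying over all $\bar\eta$ is the same as quantifying over all $\eta_h \in T_{u_h}V_h^M$, so both algebraic conditions are also equivalent to the intrinsic condition $d\mathcal{J}[v_h](\eta_h) = 0$ for all $\eta_h$.

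The only genuine content — and the step I expect to require the most care in the writeup rather than real difficulty — is justifying the interchange $\tfrac{d}{dt}\big|_{t=0}\mathcal{E}^{-1}(C(t)) = \mathcal{T}(\dot C(0))$ at the global level, i.e.\ that differentiating the globally-pieced-together GFE function $\mathcal{E}^{-1}(C(t))$ with respect to the nodal curve reproduces exactly the globally-pieced-together generalized Jacobi field. This reduces elementwise to the already-proved statement that $\Upsilon$ depends differentiably on its corner values with derivative $\partial \Upsilon / \partial v_i$ (cited from \cite[Thm.\,4.1]{sander:2015} and used in Lemma~\ref{lem:vector_isomorphism}), combined with the fact that the resulting vector field is automatically continuous across element interfaces because the nodal values agree there; no transition maps intervene, as noted in the discussion of global test-function evaluation. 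Everything else is the chain rule and the bijectivity of $\mathcal{T}$, so the proof is indeed short.
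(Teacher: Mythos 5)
Your proposal is correct and follows essentially the same route as the paper: both proofs pull a tangent vector $\bar\eta$ back to a curve $C$ in $M^n$, push it through $\mathcal{E}^{-1}$, and apply the chain rule to identify $\tfrac{d}{dt}\big|_{t=0}\mathcal{J}(\mathcal{E}^{-1}(C(t)))$ with $d\mathcal{J}(\mathcal{E}^{-1}(\bar v)](\mathcal{T}(\bar\eta))$ via Lemma~\ref{lem:vector_isomorphism}. Your version is organized slightly more cleanly (proving the identity of the two functionals once, rather than arguing each implication separately), and you make explicit the elementwise-to-global justification that the paper leaves implicit, but the substance is identical.
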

\begin{proof}
 Suppose that $\bar{v} \in M^n$ is such that
 \begin{equation*}
  dJ[\bar{v}](\bar{\eta}) = 0
 \end{equation*}
 for any $\bar{\eta} \in T_{\bar{v}} M^n$.  This means that for any $\bar{\eta}$ there is a curve
 $C : (- \epsilon, \epsilon) \to M^n$ with $\dot{C}(0) = \bar{\eta}$ such that
 \begin{equation*}
  \frac{d}{dt} \mathcal{J}(\mathcal{E}^{-1}(C(t)))\Big|_{t=0} = 0.
 \end{equation*}
 Let $c \colonequals \mathcal{E}^{-1}(C(\cdot))$ be the corresponding curve in $V_h^M$.
 Then
 \begin{align*}
  0 & =
  \frac{d}{dt}\mathcal{J}(c(t))\Big|_{t=0} \\
  & =
  d \mathcal{J} (c(0)) \Big[ \frac{dc}{dt}\Big|_{t=0}\Big] \\
  & =
  d \mathcal{J} (v_h) [\eta_h] \\
  & =
  d \mathcal{J}(\mathcal{E}^{-1}(\bar{v})](\mathcal{T}(\bar{\eta})),
 \end{align*}
 which is~\eqref{eq:algebraic_weak_formulation}.
 As the same argument also works backwards, both formulations are equivalent.
\end{proof}

\bibliographystyle{abbrvnat}
\bibliography{paper-sander-gfe-testfunctions}

\end{document}